\newcommand{\ZZ}{\mathbb{Z}}
\newcommand{\RR}{\mathbb{R}}
\newcommand{\CC}{\mathbb{C}}
\newcommand{\NN}{\mathbb{N}}
\newcommand{\norm}[1]{\left\lVert#1\right\rVert}
\newcommand{\eps}{\varepsilon}
\newcommand*\conj[1]{\overline{#1}}
\newtheorem{theorem}{Theorem}[section]
\newtheorem{corollary}[theorem]{Corollary}
\newtheorem{proposition}[theorem]{Proposition}
\newtheorem{lemma}[theorem]{Lemma}
\newtheorem{conjecture}[theorem]{Conjecture}
\newtheorem{thm}{\bf Theorem}
\newtheorem{ppt}[thm]{Proposition}
\newtheorem{lem}[thm]{Lemma}
\theoremstyle{remark}
\theoremstyle{definition}
\title{A quadratic Roth theorem for sets with large Hausdorff dimensions}
\author{Junjie Zhu}
\date{\today}
\keywords{
Fractals, polynomial configurations, Hausdorff dimension, Fourier transforms of measures
}
\subjclass[2020]{28A80, 42A38}
\begin{document}

\begin{abstract}
    Many results in harmonic analysis and geometric measure theory ensure the existence of geometric configurations under the largeness of sets, which are sometimes specified via the ball condition and Fourier decay. Recently, Kuca-Orponen-Sahlsten and Bruce-Pramanik proved Sarkozy-like theorems, which remove the Fourier decay condition and show that sets with large Hausdorff dimensions contain two-point patterns. This paper explores the existence of a three-point configuration that relies solely on the Hausdorff dimension. 
\end{abstract}

\maketitle

\section{Introduction}

\subsection{Pattern Recognition}

Finding the maximum size of a set that avoids certain patterns is an intriguing task with many applications. A well-known result \cite{Roth} by Roth shows that if $A \subset [N] = \{1, 2, \ldots, N\}$ and $A$ avoids any non-trivial 3-term arithmetic progression, then $|A| = O(N/\log\log N)$. An analog question in the continuum is also studied. If a set $A \subset [0, 1]$ avoids any arithmetic progression, it is Lebesgue null \cite{Jasinski}. Many works have examined the existence and avoidance of three-term arithmetic progressions of Lebesgue null sets. A set contains a three-term arithmetic progression if it supports a measure with a ball and a Fourier decay condition.

\begin{thm}\cite{LP}
\label{t_lp}
Let $C_1, C_2$, and $B$ be positive real numbers and $\beta \in (\frac{2}{3}, 1]$. Then, there exists $\eps_0>0$ such that the following statement holds: if $E \subset [0, 1]$ is a closed set and $\mu \in \mathcal{M}(E)$ has the following properties:
\begin{enumerate}
    \item (ball condition) $\mu([x, x+r]) \leq C_1 r^{\alpha}$ for an $\alpha \in (1 - \eps_0, 1)$ and all $r \in (0, 1]$, 
    \item (Fourier decay condition) $|\widehat{\mu}(k)| \leq C_2 (1-\alpha)^{-B} |k|^{-\frac{\beta}{2}}$ for all $k \neq 0$,
\end{enumerate}
then E contains a non-trivial 3-term arithmetic progression.
\end{thm}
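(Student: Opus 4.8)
The plan is to run a Fourier-analytic ``circle method'' argument in the spirit of Roth's theorem, transplanted to the singular measure $\mu$. Write the continuous count of $3$-term progressions as $\Lambda(f)=\iint f(x)f(x+t)f(x+2t)\,dx\,dt=\int\widehat f(-2\xi)\widehat f(\xi)^2\,d\xi$, and replace $\mu$ by its mollifications $\mu_\delta=\mu*\phi_\delta$, where $\phi$ is a fixed nonnegative bump with $\int\phi=1$; then each $\mu_\delta$ is a nonnegative Schwartz function with $\int\mu_\delta=1$ and $\operatorname{supp}\mu_\delta\subseteq E+(-\delta,\delta)$. The aim is to find $\eta>0$, depending only on $C_1,C_2,B,\beta$, so that the \emph{non-degenerate} count $\Lambda_\eta(\mu_\delta):=\iint_{|t|>\eta}\mu_\delta(x)\mu_\delta(x+t)\mu_\delta(x+2t)\,dx\,dt$ is bounded below by a positive constant uniformly for all small $\delta$. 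Granting this, for each small $\delta$ pick $(x_\delta,t_\delta)$ with $|t_\delta|>\eta$ at which the integrand is positive; then $x_\delta,x_\delta+t_\delta,x_\delta+2t_\delta\in E+(-\delta,\delta)$, these points lie in a fixed compact set, and a subsequential limit $(x_\delta,t_\delta)\to(x_*,t_*)$ has $|t_*|\ge\eta>0$ and, because $E$ is closed, $x_*,x_*+t_*,x_*+2t_*\in E$ — a non-trivial $3$-AP.

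The one general estimate I will need is the energy bound coming from the ball condition: integrating $|\widehat\mu|^2$ against a nonnegative frequency bump at scale $N$, equivalently $\mu\times\mu$ against a spatial approximate identity at scale $1/N$, gives $\int_{|\xi|\le N}|\widehat\mu(\xi)|^2\,d\xi\lesssim_{C_1}N^{1-\alpha}$ for all $N\ge1$. Split $\Lambda(\mu_\delta)$ into a low-frequency ``main'' part $|\xi|\le R$ and a high-frequency remainder. For the remainder, dyadic decomposition together with $|\widehat{\mu_\delta}|\le|\widehat\mu|$, the pointwise decay hypothesis, and the energy bound gives a quantity of the shape $(1-\alpha)^{-O_B(1)}\sum_{2^j>R}2^{j(1-\alpha-\beta/2)}$; the series converges once $\eps_0$ is small (so that $\beta/2>1-\alpha$), and its sum is $\lesssim(1-\alpha)^{-O_B(1)}R^{-(\beta/2-(1-\alpha))}$, which can be made negligible by taking $R$ large first and then $\eps_0$ small.

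Next I isolate the degenerate (near-diagonal) contribution $\iint_{|t|\le\eta}\mu_\delta\mu_\delta\mu_\delta$. The crude bound $\lesssim\eta\|\mu_\delta\|_\infty^2\sim\eta\,\delta^{-2(1-\alpha)}$ is useless as $\delta\to0$, so instead I open this integral on the Fourier side: inserting a bump $\psi(t/\eta)$ and integrating out $x$ and $t$ rewrites it (up to constants) as $\eta\int\widehat\psi(\eta\zeta)\,g_\delta(\zeta)\,d\zeta$ with $g_\delta(\zeta)=\int\widehat{\mu_\delta}(\xi-\zeta)\widehat{\mu_\delta}(\zeta-2\xi)\widehat{\mu_\delta}(\xi)\,d\xi$. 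Splitting the $\xi$-integral according to whether $|\xi|$ is small or large compared to $|\zeta|$ and using the Fourier decay together with the energy bound (all of which survive the mollification uniformly in $\delta$) shows $|g_\delta(\zeta)|\lesssim_{C_1,C_2,B}(1+|\zeta|)^{-\gamma}$ for some $\gamma>0$; consequently the degenerate part is $\lesssim\eta^{\gamma'}$ for some $\gamma'>0$, uniformly in small $\delta$, and becomes negligible once $\eta$ is chosen small.

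After these reductions, everything rests on a lower bound for the main term $\int_{|\xi|\le R}\widehat{\mu_\delta}(-2\xi)\widehat{\mu_\delta}(\xi)^2\,d\xi$, and I expect this to be the real obstacle. Choosing the frequency cutoff to be a Fej\'er-type kernel rewrites it as $\iint F_\delta(x)F_\delta(x+t)F_\delta(x+2t)\,dx\,dt$ for a nonnegative $F_\delta$ with $\int F_\delta=1$ and $\|F_\delta\|_\infty\lesssim_{C_1}R^{1-\alpha}$, so the task is to bound a genuine weighted progression count from below. The ball condition with $\alpha$ near $1$ fixes the density: at scale $1/R$, $\mu$ assigns mass $\le C_1R^{-\alpha}$ to each of the $R$ subintervals of $[0,1]$, so — once $\eps_0$ is small relative to $1/\log R$ — it is spread over essentially all of them with mass $\asymp R^{-1}$, whence $F_\delta\gtrsim1$ on a set of measure close to $1$. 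The delicate point, and the reason the Fourier decay cannot be dropped, is that this large set must itself carry the expected number of progressions: a Behrend-type configuration (progression-poor yet of Hausdorff dimension near $1$) would defeat a purely metric argument, and is ruled out only by the pseudorandomness supplied by the Fourier decay — through a transference estimate in which $\sup_{|\xi|\sim N}|\widehat\mu(\xi)|$ is played off against the $\ell^2$-mass $N^{1-\alpha}$ of $\widehat\mu$ at scale $N$ — and it is here that the threshold $\beta>\tfrac23$ is used. Carrying this out with the quantifiers in the correct order ($R$ first, $\eps_0$ second) is the technical heart of the proof.
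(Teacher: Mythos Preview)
This theorem is not proved in the present paper: it is quoted from \cite{LP} as background in the introduction, and the paper offers no argument for it. There is therefore no ``paper's own proof'' to compare your proposal against.

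That said, your outline is broadly the \L{}aba--Pramanik strategy: mollify, expand $\Lambda(\mu_\delta)$ on the Fourier side, split into a main term and a tail, control the tail by combining the pointwise decay $|\widehat\mu(\xi)|\lesssim|\xi|^{-\beta/2}$ with the energy bound $\int_{|\xi|\le N}|\widehat\mu|^2\lesssim N^{1-\alpha}$, and excise the degenerate range $|t|\le\eta$. Your tail estimate is correct in shape and correctly identifies the convergence condition $\beta/2>1-\alpha$. The main-term lower bound you describe --- that $F_\delta\gtrsim 1$ on most of $[0,1]$ once $\alpha$ is close enough to $1$ relative to $R$ --- is also the right idea, and indeed the quantifier order (choose $R$, then $\eps_0$ depending on $R$) is exactly how \cite{LP} proceeds.

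Where your sketch is thin is the degenerate part. The claim that $g_\delta(\zeta)=\int\widehat{\mu_\delta}(\xi-\zeta)\widehat{\mu_\delta}(\zeta-2\xi)\widehat{\mu_\delta}(\xi)\,d\xi$ decays like $(1+|\zeta|)^{-\gamma}$ uniformly in $\delta$ is asserted but not justified, and this is not automatic: the integrand has three factors and only one pointwise decay to spend, while the $L^2$ control is $N^{1-\alpha}$, which grows. Getting a uniform-in-$\delta$ bound on the near-diagonal piece is precisely where the hypothesis $\beta>\tfrac23$ enters in \cite{LP}; you have placed that threshold at the main-term transference step instead, which is not where it actually bites. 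If you pursue this, make the role of $\beta>\tfrac23$ explicit in the degenerate estimate rather than deferring it to a vague ``pseudorandomness'' remark.
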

In contrast, some Salem sets, which are traditionally considered large fractal sets, avoid any three-term arithmetic progression \cite{Shmer}.

A similar problem is finding the maximum size of the sets $A \subset [N]$ avoiding the pattern $\{x, x+z^2\}$ for some $x, z \in \NN$ as studied by \cite{Furstenberg, Sarkozy}. One variant of this pattern is the \textit{three-point quadratic pattern} $\{x, x+t, x+t^2\}$ in $\RR$. Any set $E \subset \RR$ with a positive Lebesgue measure contains $x, x+t, x+t^2$ for some $x, t \in \RR$ with $t\neq 0$. This is a special case of a general statement in the appendix (Lemma \ref{pos_leb_quad}). From \cite{Bourgain}, if $S \subset [0, N]$ with $|S| \geq \eps N$, then $S$ also contains $\{x, x+t, x+t^2\}$ for some $x, t \in \RR$ and $t$ greater than an absolute constant that depends on $N$ and $\eps$. One direction of study is to identify criteria for Lebesgue null sets to contain the same quadratic pattern. Fraser-Guo-Pramanik obtained the following result on the three-term polynomial pattern.

\begin{thm}\cite{fgp}
\label{t_fgp}
    Let $P: \RR \to \RR$ be a polynomial without a constant term and $\deg P \geq 2$. There exists $s_0>0$ such that the following statement holds. For $C_1, C_2$, $B$  positive real numbers, and $\beta \in (1-s_0, 1]$,  there exists $\eps_0>0$ such that if $E \subset [0, 1]$ is a closed set and $\mu \in \mathcal{M}(E)$ with the following properties:
\begin{enumerate}
    \item (ball condition) $\mu([x, x+r]) \leq C_1 r^{\alpha}$ for an $\alpha \in (1 - \eps_0, 1)$ and all $r \in (0, 1]$, 
    \item (Fourier decay condition) $|\widehat{\mu}(k)| \leq C_2 (1-\alpha)^{-B} |k|^{-\frac{\beta}{2}}$ for all $k \in \ZZ \backslash \{ 0\}$,
\end{enumerate}
then E contains three points $x, x+t, x+P(t)$ for real numbers $x, t$ with $t \neq 0$.
\end{thm}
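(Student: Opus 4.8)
\emph{Proof proposal.} The plan is to run the Fourier-analytic counting-functional (transference) method, in the form used for Theorem~\ref{t_lp} and its polynomial refinements. After normalizing we may assume $\mu$ is a probability measure (it is already supported in $[0,1]$). Fix a nonnegative bump $\psi\in C_c^\infty$ supported in $(0,1)$ and positive near $0$; because $P(0)=0$ we have $|P(t)|\to 0$ as $t\to 0$, so for $t$ in a suitable subinterval of $\operatorname{supp}\psi$ the three points $x,\,x+t,\,x+P(t)$ all lie in $[0,1]$ for $x$ in an interval of positive length. For a standard mollifier $\phi_\delta$ put $\mu_\delta=\mu*\phi_\delta$ and consider
\[
  \mathcal{N}_\delta \;=\; \iint \mu_\delta(x)\,\mu_\delta(x+t)\,\mu_\delta(x+P(t))\,\psi(t)\,dt\,dx .
\]
Since $\operatorname{supp}\psi\subset(0,1)$, every configuration registered by $\mathcal{N}_\delta$ has $t\neq 0$, so it suffices to show $\mathcal{N}_\delta\ge c_0>0$ with $c_0$ independent of $\delta$: letting $\delta\to 0$ along a subsequence and using weak-$*$ compactness together with the closedness of $E$ then produces $x,t\in\RR$, $t\neq 0$, with $x,\,x+t,\,x+P(t)\in\operatorname{supp}\mu\subseteq E$.

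To obtain the uniform lower bound, split $\mathcal{N}_\delta$ into a \emph{main term} coming from the low-frequency part of each factor $\mu_\delta$ (truncated at a large scale $R$ to be chosen) and a collection of \emph{error terms}, each of which involves at least one high-frequency factor. For the main term the relevant facts are: the ball condition with $\alpha$ close to $1$ forces the low-frequency part of $\mu$ to be a nonnegative density of unit mass with $L^\infty$ norm $\lesssim C_1R^{1-\alpha}$ — essentially a truncated absolutely continuous measure dominated by a constant multiple of Lebesgue measure — while the Fourier decay hypothesis rules out the arithmetically structured measures that could otherwise avoid the configuration at the scales where $\psi$ lives. Feeding this into (a quantitative form of) Lemma~\ref{pos_leb_quad} — that sets of positive Lebesgue measure carry a positive weighted count of the pattern $\{x,x+t,x+P(t)\}$ — gives a bound $\mathrm{(main)}\ge 2c_0>0$ once $R^{1-\alpha}$ is under control, i.e. once $\alpha\in(1-\eps_0,1)$ for a suitable $\eps_0$; this is where the hypothesis $\alpha\in(1-\eps_0,1)$ is used.

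The core of the argument is the estimation of the error terms. On the Fourier side each is (a sum over $\ZZ$, or an integral over $\RR$, of) a product of three copies of $\widehat{\mu_\delta}$ — with at least one argument of size $>R$ — against the multiplier
\[
  M(\xi_2,\xi_3)\;=\;\int e^{2\pi i(\xi_2 t+\xi_3 P(t))}\,\psi(t)\,dt .
\]
Here $\deg P=d\ge 2$ enters decisively: the $d$-th $t$-derivative of the phase equals $\xi_3$ times the nonzero constant $d!\,a_d$ ($a_d$ the leading coefficient of $P$), so van der Corput's lemma yields $|M(\xi_2,\xi_3)|\lesssim_{P,\psi}(1+|\xi_3|)^{-1/d}$, uniformly in $\xi_2$, with additional rapid decay in the non-stationary regime $|\xi_2|\gg|\xi_3|$. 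Combining this oscillatory decay with the pointwise bound $|\widehat{\mu}(\xi)|\le C_2(1-\alpha)^{-B}|\xi|^{-\beta/2}$ on the high-frequency factor and the Frostman-energy bound $\int_{|\xi|\le\rho}|\widehat{\mu}(\xi)|^2\,d\xi\lesssim C_1\rho^{1-\alpha}$ (which follows from the ball condition) on the remaining factors, via Hölder/Cauchy--Schwarz and a dyadic decomposition of the high-frequency range, one should arrive at
\[
  |\mathrm{(error)}|\;\lesssim\;C_1^{O(1)}C_2^{O(1)}(1-\alpha)^{-O(B)}\,R^{-\kappa} ,
\]
where $\kappa=\kappa(\beta,d)$ is a positive exponent precisely when $\beta>1-s_0$ for a suitable $s_0=s_0(d)>0$ — this is how the degree of $P$ determines the threshold on $\beta$, namely through the competition between the oscillatory gain $1/d$ and the Fourier-decay budget $\beta/2$. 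Choosing $R$ large (depending on $C_1,C_2,B,\alpha$) makes $|\mathrm{(error)}|<c_0$, and then $\mathcal{N}_\delta\ge\mathrm{(main)}-|\mathrm{(error)}|\ge c_0$ uniformly in $\delta$, as required.

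The step I expect to be the main obstacle is this error estimate: one must extract a genuinely positive power $R^{-\kappa}$ — with $\kappa(\beta,d)$ large enough that the admissible range of $\beta$ is an honest neighbourhood of $1$ — out of the interaction between the van der Corput decay of $M$ and the two Fourier-side inputs for $\mu$, all while carrying the blow-up $(1-\alpha)^{-B}$ through each application of Hölder/Cauchy--Schwarz and still leaving room to absorb it by enlarging $R$. Getting $\kappa$ sharp enough almost certainly requires replacing the crude energy bound above by a sharp Mockenhaupt--Mitsis--Bak--Seeger-type restriction estimate for $\mu$, valid in the joint $(\alpha,\beta)$-range of the theorem, and organizing the (several) error terms so that each is controlled by it; a secondary delicate point is making the main-term lower bound uniform over the class of admissible measures, which is exactly where both hypotheses on $\mu$ have to be used together.
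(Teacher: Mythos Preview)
The paper does not prove Theorem~\ref{t_fgp}: it is quoted from \cite{fgp} as background, so there is no ``paper's own proof'' of this statement to compare against. What the paper \emph{does} import from \cite{fgp} and use in its own argument are Proposition~\ref{p_pos} (the limiting argument that turns a positive configuration integral into an actual configuration in $E$) and, crucially, the Sobolev improving estimate (Propositions~\ref{cor_sie} and~\ref{th_sie}):
\[
\left|\int\!\!\int f(x+t)\,g(x+P(t))\,h(x)\,\tau_l(t)\,dt\,dx\right|\le C\,2^{\kappa l}\,\norm{f}_{H^{-\gamma}}\norm{g}_{H^{-\gamma}}\norm{h}_{H^{-\gamma}}.
\]
In \cite{fgp} this trilinear $H^{-\gamma}$ bound is the engine that drives the error estimates for Theorem~\ref{t_fgp}: each error term has at least one factor equal to $\mu_\eps-\mu_{R^{-1}}$, whose $H^{-\gamma}$ norm is small because the ball condition controls $I_s(\mu)$ for $s$ close to $1$, and the Fourier decay enters in making the remaining factors' $H^{-\gamma}$ norms finite with the right dependence on $(1-\alpha)$.

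Your outline is the same transference skeleton --- counting functional, low/high split, main term via the Lebesgue case, error terms via oscillation of $M(\xi_2,\xi_3)$ --- and is broadly correct. The main divergence is in how you propose to handle the errors: you aim to combine the raw van der Corput bound $|M|\lesssim(1+|\xi_3|)^{-1/d}$ directly with the pointwise decay $|\widehat{\mu}|\lesssim|k|^{-\beta/2}$ and the energy bound, and you conjecture that a Mockenhaupt--Mitsis--Bak--Seeger restriction estimate may be needed to close the argument. That is not the route taken in \cite{fgp}; instead the oscillatory analysis (stationary phase, H\"ormander/Li-type estimates) is packaged once and for all into the Sobolev improving inequality above, after which the error terms are controlled by bookkeeping $H^{-\gamma}$ norms --- no restriction theorem is invoked. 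So your plan is in the right spirit, but the anticipated ``main obstacle'' is resolved by a cleaner tool than the one you reach for.
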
 

Another variant is the \textit{two-point quadratic pattern} $\{(x, y), (x+z, y+z^2)\}$ in $\RR^2$. Kuca-Orponen-Sahlsten studied the problem of identifying two-point quadratic patterns in $\RR^2$ for Lebesgue null sets using the Hausdorff dimension only.

\begin{thm}\cite{kos}
\label{t_kos}
    There is an absolute constant $\eps>0$ such that the following holds. Let $K \subset \RR^2$ with $\dim_H(K) > 2-\eps$. Then, there exists $x, y , z \in \RR$ with $z \neq 0$, such that
    $$\{(x, y), (x+z, y+z^2)\} \subset K.$$
\end{thm}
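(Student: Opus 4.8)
The plan is to run a Fourier-analytic configuration-counting argument, in the spirit of the counting scheme underlying Theorems~\ref{t_lp} and~\ref{t_fgp}, but with the pointwise Fourier-decay hypothesis replaced by the weak $L^2$-averaged decay that every Frostman measure possesses. Since $\dim_H$ is a bi-Lipschitz invariant I may assume $K\subset[0,1]^2$ is compact. Fix an even nonnegative $\phi$ with $\int\phi=1$, set $\phi_\delta=\delta^{-2}\phi(\cdot/\delta)$, fix a nonnegative $\psi\in C_c^\infty((0,1])$, and for a finite measure $\mu$ supported on $K$ define the smoothed count
\[ \mathcal N_\delta(\mu)=\int_{\RR^2}\int_\RR (\mu*\phi_\delta)(x+z,\,y+z^2)\,\psi(z)\,dz\,d\mu(x,y). \]
If $K$ contains no quadratic pattern with $z\in\operatorname{supp}\psi$, then $(x+z,y+z^2)$ stays a uniform positive distance from $K$ as $(x,y)$ ranges over $\operatorname{supp}\mu$ and $z$ over $\operatorname{supp}\psi$ (compactness), so $\mu*\phi_\delta$ vanishes there and $\mathcal N_\delta(\mu)=0$ for all small $\delta$. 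Hence it suffices to produce a measure $\mu$ on $K$ with $\liminf_{\delta\to0}\mathcal N_\delta(\mu)>0$.

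Expanding $\mu*\phi_\delta$ on the Fourier side and integrating in $z$ gives
\[ \mathcal N_\delta(\mu)=\int_{\RR^2}|\widehat\mu(\xi)|^2\,\widehat\phi(\delta\xi)\,\Psi(\xi)\,d\xi,\qquad \Psi(\xi)=\int_\RR e^{2\pi i(z\xi_1+z^2\xi_2)}\,\psi(z)\,dz . \]
A stationary-phase analysis of $\Psi$: the phase $z\xi_1+z^2\xi_2$ has no critical point on $\operatorname{supp}\psi$ unless $-\xi_1/(2\xi_2)\in\operatorname{supp}\psi$, so $\Psi$ is Schwartz-fast off the cone $\Gamma=\{\xi:\ \xi_1/\xi_2\in-2\operatorname{supp}\psi\}$, while on $\Gamma$ the curvature of the parabola gives $|\Psi(\xi)|\lesssim|\xi_2|^{-1/2}$. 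Split $\mathcal N_\delta$ over $|\xi|\le r_0$, over $\{|\xi|>r_0\}\setminus\Gamma$, and over $\{|\xi|>r_0\}\cap\Gamma$. On $|\xi|\le r_0$, because $\operatorname{supp}\mu\subset[0,1]^2$ one has $|\widehat\mu(\xi)|\ge\tfrac12\widehat\mu(0)$ and $\operatorname{Re}\Psi(\xi)\ge\tfrac12\|\psi\|_1$ once $r_0$ is a suitable absolute constant, and $\widehat\phi(\delta\xi)\to1$ uniformly there; as $\widehat\phi$ is real, the contribution of this ball to $\operatorname{Re}\mathcal N_\delta$ is $\ge c(\psi)\,\widehat\mu(0)^2>0$ for all small $\delta$. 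The off-cone region contributes $\lesssim_N \widehat\mu(0)^2 r_0^{-N}$ by the rapid decay of $\Psi$, hence is negligible for a good choice of $r_0$.

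It remains to control the cone contribution. Decomposing $\Gamma$ into dyadic annuli and using $|\Psi|\lesssim|\xi_2|^{-1/2}$, its modulus is $\lesssim\sum_{R\ \text{dyadic}} R^{-1/2}\int_{|\xi|\sim R}|\widehat\mu(\xi)|^2\,d\xi$, and this is where largeness of the dimension enters: if $\mu$ is a Frostman measure of exponent $s$ on $[0,1]^2$ (available as soon as $\dim_H K>s$), then finiteness of the $t$-energy for every $t<s$ yields the averaged bound $\int_{|\xi|\sim R}|\widehat\mu(\xi)|^2\,d\xi\lesssim_t R^{2-t}$, so the cone sum is dominated by $\sum_R R^{3/2-t}$, which converges once $t>3/2$, i.e. $s>3/2$. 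Thus for any $\eps<\tfrac12$ the cone integral converges absolutely, and its tail over $|\xi|>R_*$ is small when $R_*$ is large; no pointwise Fourier decay is needed to give meaning to $\lim_{\delta\to0}\mathcal N_\delta$.

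The hard part is that absolute convergence is not positivity: the low frequencies on the cone, $r_0\le|\xi|\le R_*$, contribute only $O(\widehat\mu(0)^2)$ by the crude estimate — the same order as the main term above, with no smallness for free — so one must instead extract cancellation from the oscillation of $\Psi$ there. This low-frequency cone mass morally records how close $K$ is to being invariant under a translation by some vector $(h,h^2)$ with $h\in\operatorname{supp}\psi$, which is itself an obstruction, since exact such invariance already forces the pattern. The step I would use to beat it — the crux of removing the Fourier-decay hypothesis — is a structural dichotomy: first, by pigeonholing the Frostman measure over dyadic scales and locations and rescaling anisotropically, pass to a model in which $\mu$ is \emph{thick} at unit scale, at the cost of replacing $z^2$ by $\lambda z^2$ for some $\lambda\in(0,1]$, which the scheme tolerates since it uses only $\deg=2$; then either $\widehat\mu$ is sufficiently dispersed off $\Gamma$ at scale $1$, in which case a stationary-phase estimate beats the cone term and $\mathcal N_\delta>0$ follows, or $\widehat\mu$ concentrates on $\Gamma$ at scale $1$, in which case an inverse-theorem / density-increment argument refines $K$ to a set where the pattern is directly visible. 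Making this dichotomy quantitative and uniform in the dimension gap $\eps$ is where the real work lies; everything else is the soft scheme described above.
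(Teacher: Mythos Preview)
The paper does not prove Theorem~\ref{t_kos}; it is quoted from \cite{kos}. However, the paper's own proof of Theorem~\ref{t_main} is an adaptation of the \cite{kos} argument, so the relevant machinery is visible in Sections~\ref{sec_2}--\ref{sec_3} and especially in Lemma~\ref{p_sg} and its proof in Section~\ref{sec_sg}. Against that template, your proposal has the right Fourier-analytic shell and you correctly isolate the obstruction: with a generic Frostman measure the ``low-frequency cone'' contribution is of the same order as the main term, so absolute convergence of $\mathcal N_\delta$ does not yield positivity. Where you have a genuine gap is in your proposed resolution. The ``structural dichotomy / density increment'' you sketch is not what \cite{kos} does and, as written, is not a proof: you have not specified the inverse statement, the increment quantity, or why the iteration terminates, and anisotropic rescaling changes the curve in a way your scheme has to track carefully.

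The actual key idea --- the one you are missing --- is that one does \emph{not} work with an arbitrary Frostman measure. One first pigeonholes (as in Section~\ref{sec_2}) to a cube on which $E$ has Hausdorff \emph{content} at least $1-\delta$; this is the sole use of ``dimension close to full''. On such a set one then \emph{constructs} a tailor-made probability measure $\mu$ (Lemma~\ref{p_sg}, proved in Section~\ref{sec_sg}) by taking Frostman measures on the generation-$T$ dyadic children and reweighting them by a fixed smooth bump $\varphi$. Because every child carries the mass $\int_Q\varphi$, one gets $|\widehat\mu(\xi)-\widehat\varphi(\xi)|\lesssim 2^{-T}|\xi|$, and since $\widehat\varphi$ is Schwartz this yields the \emph{spectral gap} $\int_{A^{1/5}\le|\xi|\le B^2}|\widehat\mu(\xi)|\,d\xi\le A^{-3}$ for $T$ chosen in terms of $A,B$. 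This single estimate kills exactly the mid-frequency terms that were blocking you (your ``low-frequency cone''), with room to spare against the main term $\gtrsim A^{-1}$; the genuinely high frequencies are then handled by the parabola's curvature (stationary phase / Sobolev improving, cf.\ Proposition~\ref{cor_sie}), which you already had. No dichotomy or density increment is needed.
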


The three-point quadratic pattern in $\RR$ (Theorem \ref{t_fgp}) and the two-point quadratic pattern in $\RR^2$ (Theorem \ref{t_kos}) have different criteria to ensure their existence. We would like to understand their fundamental differences. Specifically, we investigate whether having a large Hausdorff dimension alone is sufficient to ensure the existence of the three-point quadratic pattern. Our conjecture is as follows.

\begin{conjecture}
    \label{c1}
    For all (Borel) $E \subset [0,1]$ with $\dim_H(E) > \frac{1}{2}$, there exist $x, t \in \RR$ with $t \neq 0$ such that
    $$\{x, x+t, x+t^2 \} \subset E.$$
\end{conjecture}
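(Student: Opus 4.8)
\medskip
\noindent\emph{Strategy toward Conjecture~\ref{c1}.} The plan is to run a ``quadratic Roth'' counting argument at a single scale --- as in the Fourier-analytic proofs of Theorems~\ref{t_lp} and~\ref{t_fgp} --- but to replace the Fourier decay hypothesis used there by the curvature of $t\mapsto t^{2}$ together with the sole input $\dim_{H}(E)>\tfrac12$, handled in the $\delta$-discretised spirit of Theorem~\ref{t_kos}. By Frostman's lemma it suffices to find the configuration inside $\operatorname{supp}\mu$ for some compactly supported probability measure $\mu$ with $\mu([x,x+r])\le C r^{\alpha}$ for all $r\in(0,1]$ and a fixed $\alpha>\tfrac12$. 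Fix a nonnegative $a\in C^{\infty}_{c}$ with $\operatorname{supp}a\subset[\tfrac12,1]$, let $\mu_{\delta}=\mu*\psi_{\delta}$ for a standard mollifier at scale $\delta$, and set
\[
\mathcal N_{\delta}\;=\;\int\!\!\int \mu_{\delta}(x)\,\mu_{\delta}(x+t)\,\mu_{\delta}(x+t^{2})\,a(t)\,dx\,dt .
\]
A routine compactness argument shows that if $\liminf_{\delta\to0}\mathcal N_{\delta}>0$ then there exist $x$ and $t\in[\tfrac12,1]$, in particular $t\neq0$, with $x,\,x+t,\,x+t^{2}\in\operatorname{supp}\mu$. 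So the whole problem reduces to a lower bound on $\mathcal N_{\delta}$ that is uniform in $\delta$.

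Write $\mu_{\delta}=\mu^{\mathrm{lo}}+\mu^{\mathrm{hi}}_{\delta}$, where $\mu^{\mathrm{lo}}=\mu*g_{\delta_{0}}$ is the convolution with a \emph{Gaussian} at a fixed coarse scale $\delta_{0}\ll1$, and expand the trilinear form $\mathcal N_{\delta}$ into $\mathcal N_{\delta}[\mu^{\mathrm{lo}},\mu^{\mathrm{lo}},\mu^{\mathrm{lo}}]$ plus seven cross terms, each carrying at least one factor $\mu^{\mathrm{hi}}_{\delta}$. The main term is a nonnegative integral whose integrand --- a product of Gaussian mollifications of a probability measure --- is everywhere strictly positive, so it is bounded below by some $c(\delta_{0})>0$, independent of $\delta$. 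It then remains to control the cross terms. On the Fourier side one has the constraint $\xi_{1}+\xi_{2}+\xi_{3}=0$, so a cross term with two low factors contributes only an exponentially small tail; the substantive ones are those with at least two factors localised at frequency $\gtrsim\delta_{0}^{-1}$.

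This is where the curvature enters. After a Littlewood--Paley localisation to blocks $|\xi|\sim2^{j}$ (with $\delta_{0}^{-1}\lesssim2^{j}\lesssim\delta^{-1}$), the $t$-integral in such a term carries the phase $\xi_{2}t+\xi_{3}t^{2}$, whose second derivative is $2\xi_{3}$; van der Corput's lemma then yields a gain of $|\xi_{3}|^{-1/2}\sim2^{-j/2}$ --- precisely the substitute for the Fourier decay of $\mu$ that is available in Theorem~\ref{t_fgp}. One combines this with the only size information at hand, namely the single-block bounds $\|P_{j}\mu_{\delta}\|_{2}^{2}\lesssim2^{j(1-\alpha)}$ and $\|P_{j}\mu_{\delta}\|_{\infty}\lesssim2^{j(1-\alpha)}$ coming from the ball condition, sums the resulting series in $j$, and aims to conclude that, for a suitable fixed $\delta_{0}$, the cross terms are at most $\tfrac12\,c(\delta_{0})$; that would complete the argument.

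The principal obstacle is exactly this summation, which must produce a bound \emph{relative} to the main term rather than merely an absolute smallness. If the frequency integrals are estimated by absolute values, using only the curvature gain and the $L^{2}$ and $L^{\infty}$ block bounds, the series converges only for $\alpha$ close to $1$; and the crudest count, which treats $x,\,x+t,\,x+t^{2}\in E$ as three independent conditions and ignores curvature altogether, reaches only $\alpha>\tfrac23$. To descend all the way to $\alpha>\tfrac12$ one has to exploit cancellation \emph{within} the frequency sums --- in effect an $L^{p}$-improving or decoupling inequality for $\mu$ against the parabola, or a structure-versus-randomness increment iterated over the $\sim\log(1/\delta)$ scales $2^{j}$, as befits a genuinely ``quadratic'' Roth theorem. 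I expect the scheme above, executed with care, to deliver the configuration already under the weaker hypothesis $\dim_{H}(E)>1-\eps$ for some absolute $\eps>0$; bridging the remaining gap to $\tfrac12$, together with a matching construction of a set of dimension approaching $\tfrac12$ that avoids $\{x,x+t,x+t^{2}\}$, is what a complete resolution of Conjecture~\ref{c1} would require.
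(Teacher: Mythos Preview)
This statement is a \emph{conjecture}; the paper does not prove it. What the paper proves is the weaker Theorem~\ref{t_main} (via Proposition~\ref{p_main}), which gives the configuration only under $\dim_H(E)>1-\eps$ for some small absolute $\eps>0$, and even then with the quadratic coefficient $a$ confined to an interval rather than equal to $1$. Your proposal is likewise not a proof --- you say so yourself --- so there is no ``paper's proof'' to check against. What one can do is compare your sketch to the paper's route to its partial result.

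The overall architecture is the same: a trilinear counting form in a mollified measure, a low/high frequency split, a positive main term, and curvature (oscillatory-integral) control on the error terms. But the paper's execution differs from yours in one decisive respect. You propose a two-scale split $\mu_\delta=\mu^{\mathrm{lo}}+\mu^{\mathrm{hi}}_\delta$ and plan to bound the high--high terms using only the Frostman ball condition together with a van der Corput gain of $|\xi|^{-1/2}$; as you correctly observe, the resulting block sums do not converge unless $\alpha$ is close to $1$. The paper does not try to sum those series. Instead it introduces a \emph{three}-scale split $\mu_\eps=\mu_{A^{-1}}+(\mu_{B^{-1}}-\mu_{A^{-1}})+(\mu_\eps-\mu_{B^{-1}})$ with two parameters $A\ll B$, and replaces the hypothesis ``$\dim_H(E)$ large'' by the stronger ``$\mathcal{H}^\beta_\infty(E)\ge 1-\delta$'' (the reduction from the former to the latter on some dyadic sub-cube is a pigeonhole argument, Section~\ref{sec_2}). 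Under this content hypothesis one can manufacture a Frostman measure with a \emph{spectral gap}: $\int_{A^{1/5}\le|\xi|\le B^2}|\widehat\mu(\xi)|\,d\xi\le A^{-3}$ (Lemma~\ref{p_sg}). The ``mid'' terms --- precisely the ones your two-scale scheme cannot isolate --- are then killed by the spectral gap, while the genuine ``high'' terms are handled by the Sobolev-improving estimate (Proposition~\ref{cor_sie}), which packages your curvature gain, together with the finite $s$-energy of $\mu$; choosing $B$ large enough relative to $A$ makes these small without any geometric summation.

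So your diagnosis of the obstruction is right, and the paper's answer to it is the spectral-gap construction from large Hausdorff content, not a sharper oscillatory bound. This buys exactly the range you predicted, $\dim_H(E)>1-\eps$; it does not reach $\tfrac12$, and Conjecture~\ref{c1} remains open in the paper as well.
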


We require $\dim_H(E) > \frac{1}{2}$ because of the following result.

\begin{thm}\cite{mathe}
 There exists a compact set $E \subset \RR$ of Hausdorff dimension $ \frac{1}{2}$ such that $E$ does not contain $3$ distinct points $x, y, z$ 
satisfying $(z-x)=(y-x)^2$.
\end{thm}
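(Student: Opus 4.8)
The plan is to realize $E$ as a ``Diophantine'' Cantor set built from extremely good rational approximations with a rapidly growing, squarefree sequence of denominators; this follows M\'ath\'e's strategy. Fix integers $q_1\mid q_2\mid q_3\mid\cdots$ with each $q_n$ squarefree (for instance $q_n$ the product of the first $r_n$ primes) and with $q_n$ growing super-exponentially, say $q_{n+1}\ge q_n^{\,n}$, and put $\delta_n=\tfrac{1}{20}q_n^{-2}$. Let $E_0=[0,1]$ and, inductively, let $E_n$ be the union of all intervals $[a/q_n-\delta_n,\,a/q_n+\delta_n]$ with $a\in\{0,1,\dots,q_n-1\}$ that are contained in some interval of $E_{n-1}$. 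Since $q_{n-1}\mid q_n$ and $q_n$ is enormous compared with $q_{n-1}^2$, each interval of $E_{n-1}$ contains $\asymp\delta_{n-1}q_n$ such admissible sub-intervals, so $(E_n)_n$ is a decreasing sequence of nonempty compact sets; set $E=\bigcap_n E_n$.

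First I would check that $\dim_H E=\tfrac12$. The upper bound is immediate: for each $n$ the set $E$ is covered by at most $q_n$ intervals of length $2\delta_n\asymp q_n^{-2}$, so $\dim_H E\le\underline{\dim}_B E\le\liminf_n\frac{\log q_n}{-\log(2\delta_n)}=\tfrac12$; this is exactly where the degree $2$ of the relation enters, since the Diophantine step below forces the admissible approximation error to be of order $q_n^{-\deg}$, hence the exponent $1/\deg=1/2$. For the lower bound I would apply the mass distribution principle to the natural probability measure $\mu$ on $E$ that is uniform over children at each level. A ball of radius $r$ meets $O(1+rq_{n+1})$ children inside any given level-$n$ interval, and the $\mu$-mass of a level-$n$ interval is $\asymp|A_n|^{-1}$ with $|A_n|\asymp q_n/(q_1\cdots q_{n-1})$; because the $q_n$ grow fast enough that $\log q_n$ dominates $\sum_{i<n}\log q_i$, one has $|A_n|=q_n^{1-o(1)}$, and a short computation then gives $\mu(B(x,r))\le C_\eps\,r^{1/2-\eps}$ for every $\eps>0$. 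Hence $\dim_H E\ge\tfrac12-\eps$ for all $\eps$, so $\dim_H E=\tfrac12$.

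Next I would rule out the pattern. Suppose $x,y,z\in E$ are distinct with $z-x=(y-x)^2$, i.e. $P(x,y,z)=0$ where $P(X,Y,Z)=(Y-X)^2-(Z-X)$. For each $n$ let $a_n/q_n,b_n/q_n,c_n/q_n$ be the centres of the level-$n$ intervals containing $x,y,z$; then $a_n,b_n,c_n\in\{0,\dots,q_n-1\}$ and each of $x,y,z$ lies within $\delta_n$ of its centre, so since $P$ is Lipschitz on $[0,1]^3$ (with constant at most $6$) we obtain $|P(a_n/q_n,b_n/q_n,c_n/q_n)|\le6\delta_n<q_n^{-2}$. But $q_n^2\,P(a_n/q_n,b_n/q_n,c_n/q_n)=(b_n-a_n)^2-q_n(c_n-a_n)$ is an integer of absolute value $<1$, hence $0$; therefore $q_n\mid(b_n-a_n)^2$, and because $q_n$ is squarefree this forces $q_n\mid b_n-a_n$, whence $a_n=b_n$ since $0\le a_n,b_n<q_n$. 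As this holds for all $n$, $|x-y|\le2\delta_n\to0$, so $x=y$, contradicting distinctness. Hence $E$ contains no three distinct points $x,y,z$ with $z-x=(y-x)^2$, and the theorem follows.

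I expect the real obstacle to be calibrating the two competing demands on $(q_n,\delta_n)$: the congruence argument needs $\delta_n$ of order $q_n^{-2}$ so that the relevant integer is forced to vanish, while the Hausdorff dimension is governed by precisely this rate and would drop below $\tfrac12$ if the $q_n$ grew only exponentially (then $\sum_{i<n}\log q_i$ would be a nontrivial fraction of $\log q_n$ and the mass estimate would yield a smaller exponent). Super-exponential growth reconciles the two, but it is the mass-distribution lower bound $\dim_H E\ge\tfrac12$, rather than the short arithmetic avoidance step, that requires the real care; the squarefreeness of the $q_n$ — equivalently, the implication $q_n\mid m^2\Rightarrow q_n\mid m$ — is the small arithmetic input that collapses the congruence $q_n\mid(b_n-a_n)^2$ to $a_n=b_n$.
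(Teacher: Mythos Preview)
The paper does not supply a proof of this theorem; it is quoted from \cite{mathe} as motivation for the threshold $\dim_H(E)>\tfrac12$ in Conjecture~\ref{c1}, so there is no in-paper argument to compare against. Your construction is the standard M\'ath\'e scheme and is essentially correct: the Diophantine step (forcing $(b_n-a_n)^2\equiv 0\pmod{q_n}$ and using squarefreeness of $q_n$) is clean, and the dimension upper bound is immediate from the level-$n$ cover. The only place that deserves a little more care is the mass-distribution lower bound: your claim that $|A_n|=q_n^{1-o(1)}$ relies on $\sum_{i<n}\log q_i=o(\log q_n)$, which does follow from $q_{n+1}\ge q_n^{\,n}$ but should be stated explicitly, and the ball estimate has to be checked on both regimes $r\gtrless q_n^{-1}$ within the window $\delta_n\lesssim r\lesssim\delta_{n-1}$. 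With those details filled in, the argument goes through and matches M\'ath\'e's original.
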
 

In this manuscript, we present results from adapting methods in \cite{kos} and \cite{BP} that partly address Conjecture \ref{c1}.

\subsection{Main Results}

The main theorem is as follows.
\begin{theorem}
\label{t_main}
     Given $q_0 \in [0, 1)$, there exists $\beta=\beta(q_0) \in (0, 1)$ with the following property. For every (Borel) $E \subset \RR$ with $\dim_H(E) > \beta$, one can find $0<a_0<a_1$ such that for every $(a, q)$ with $a_0 \leq |a| \leq a_1$, $|q| \leq q_0$,
     $$\{x, x+t, x+P_{a, q}(t) \} \subset E$$
     for some $x, t\in \RR$, $t \neq 0$. Here, $P_{a, q}(t) = at^2+qt$.
\end{theorem}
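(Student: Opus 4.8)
The plan is to attach to the pattern a smoothed trilinear counting functional, to show by averaging over the parameter $a$ that it fails to vanish as the smoothing is removed for almost every $(a,q)$, and then to pass from ``almost every'' to ``every'' by a compactness argument.

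\emph{Reduction and set-up.} Using the definition of Hausdorff dimension and Frostman's lemma, I would first reduce to the case where $E$ is compact and carries a probability measure $\mu$ with $\mu(B(x,r))\le Cr^{s}$ for all $x,r$, with $s$ fixed, $\beta<s<\dim_H E$; after translating and dilating we may take $\operatorname{supp}\mu$ inside a short interval about the origin, and this normalisation is what makes $a_0,a_1$ depend on $E$. Fix a nonnegative $\phi\in C_c^\infty$ with $\int\phi=1$, put $\mu_\eps=\mu*\phi_\eps$, $\phi_\eps(x)=\eps^{-1}\phi(x/\eps)$, and fix a nonnegative $\psi\in C_c^\infty$ supported in a short interval inside $(0,\infty)$ that avoids the vertices of the $P_{a,q}$ over the box. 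For
\begin{equation*}
\mathcal{N}_\eps(a,q)=\int_\RR\!\int_\RR\mu_\eps(x)\,\mu_\eps(x+t)\,\mu_\eps\!\bigl(x+P_{a,q}(t)\bigr)\,\psi(t)\,dt\,dx ,
\end{equation*}
a compactness argument --- if $E$ contained no such triple then the integrand would vanish identically for small $\eps$ --- shows that $\limsup_{\eps\to0}\mathcal{N}_\eps(a,q)>0$ already forces $E$ to contain $\{x,x+t,x+P_{a,q}(t)\}$ for some $x$ and some $t\in\operatorname{supp}\psi$, in particular with $t\neq0$.

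\emph{Fourier side and the main term.} Fourier inversion followed by the $x$- and $t$-integrations gives
\begin{equation*}
\mathcal{N}_\eps(a,q)=\iint_{\RR^{2}}\widehat{\mu_\eps}(-\xi-\eta)\,\widehat{\mu_\eps}(\xi)\,\widehat{\mu_\eps}(\eta)\,J_{a,q}(\xi,\eta)\,d\xi\,d\eta,\qquad J_{a,q}(\xi,\eta)=\int_\RR\psi(t)\,e^{2\pi i(t\xi+P_{a,q}(t)\eta)}\,dt .
\end{equation*}
When $\operatorname{supp}\psi$ is short the phase $t\xi+P_{a,q}(t)\eta$ is small on a large box of frequencies, where $J_{a,q}\approx\int\psi$; isolating this contribution with a nonnegative smooth Fourier multiplier produces a main term $\mathcal{M}_\eps(a,q)$ which is essentially $a$-independent, equals up to negligible errors a positive multiple of $\int(\mu_\eps*\chi)^{2}\,\mu_\eps\,dx\ge0$, and is $\ge c_0>0$ uniformly for small $\eps$ and all $(a,q)$, with $c_0$ growing as $\operatorname{supp}\psi$ shrinks. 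Put $\mathcal{E}_\eps(a,q)=\mathcal{N}_\eps(a,q)-\mathcal{M}_\eps(a,q)$.

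\emph{The error term: the crux.} The substance of the proof is an estimate for $\mathcal{E}_\eps(a,q)$. Van der Corput applied to the quadratic phase gives $|J_{a,q}(\xi,\eta)|\lesssim(1+|\eta|)^{-1/2}$, with rapid decay off the slab where the phase has a critical point in $\operatorname{supp}\psi$ (a neighbourhood of the line $\xi=-q\eta$ of width $\sim|a\eta|$), while the ball condition yields the local bound $\int_I|\widehat{\mu_\eps}|^{2}\lesssim|I|^{1-s}$ for every interval $I$ with $|I|\ge1$, uniformly in $\eps$. Inserting these into $\mathcal{E}_\eps$ in absolute value and summing dyadically would require $s>1$, which is impossible in one dimension; this is precisely the point at which the Fourier-decay hypothesis of \cite{fgp} intervened. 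The plan to get past it is to estimate the $a$-average $\int_{a_0}^{a_1}|\mathcal{E}_\eps(a,q)|^{2}\,da$ instead. Expanding the square, the decisive quantity is the three-variable oscillatory integral $\int_{a_0}^{a_1}J_{a,q}(\xi,\eta)\,\overline{J_{a,q}(\xi',\eta')}\,da$, and the heart of the argument is to show that it is sharply concentrated, on the unit scale, near $(\xi,\eta)=(\xi',\eta')$; granting this, and inserting the Frostman $L^2$ bounds, $\int_{a_0}^{a_1}|\mathcal{E}_\eps(a,q)|^{2}\,da$ is dominated by $\sum_{R}R^{-1}\iint_{|\eta|\sim R,\ \mathrm{slab}}|\widehat{\mu_\eps}(-\xi-\eta)|^{2}|\widehat{\mu_\eps}(\xi)|^{2}|\widehat{\mu_\eps}(\eta)|^{2}\,d\xi\,d\eta\lesssim\sum_{R}R^{1-2s}$, which converges for $s>1/2$ and can be made as small as desired by taking $\operatorname{supp}\psi$ short. \textbf{This diagonal-concentration estimate, uniform for $|q|\le q_0$, is the principal obstacle.} The critical geometry of the three-variable phase degenerates as $q$ moves, which is why the dimension threshold $\beta$ depends on $q_0$; restricting $a$ to a compact subinterval of $(0,\infty)$ keeps the chirp's second derivative comparable to $|\eta|$; and since only an $a$-average is controlled, the theorem can only assert the pattern for a range of $(a,q)$ rather than for one fixed pair.

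\emph{From almost every to every.} Granting $\int_{a_0}^{a_1}|\mathcal{E}_\eps(a,q)|^{2}\,da\le\delta$ uniformly in $\eps$, with $\delta$ arbitrarily small for $\operatorname{supp}\psi$ short, Chebyshev's inequality gives $\mathcal{N}_\eps(a,q)=\mathcal{M}_\eps+\mathcal{E}_\eps>c_0/2$ for all $a$ outside an exceptional set whose measure is bounded, uniformly in $\eps$, by a quantity that shrinks with $\operatorname{supp}\psi$; hence $\limsup_{\eps\to0}\mathcal{N}_\eps(a,q)>0$ for almost every $a$ in $[a_0,a_1]$, so $E$ contains the pattern for almost every such $a$. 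Since $E$ and $\operatorname{supp}\psi$ are compact and $\operatorname{supp}\psi$ is bounded away from $0$, the set of $(a,q)$ for which $E$ contains such a triple with $t\in\overline{\operatorname{supp}\psi}$ is closed; a closed full-measure subset of the box $\{\,a_0\le|a|\le a_1,\ |q|\le q_0\,\}$ is the whole box, and the sign of $a$ is handled by reflecting $E$. This gives the conclusion for every $(a,q)$ in the box.
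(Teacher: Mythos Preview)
Your approach is genuinely different from the paper's, and the central step you yourself flag as ``the principal obstacle'' is indeed unresolved: you never establish the diagonal-concentration estimate for $\int_{a_0}^{a_1}J_{a,q}(\xi,\eta)\overline{J_{a,q}(\xi',\eta')}\,da$, and the claimed bound $\sum_R R^{1-2s}$ hinges entirely on it. The phase in $a$ is linear, $a(t^2\eta-s^2\eta')$, so integration in $a$ forces only $t^2\eta\approx s^2\eta'$; extracting from this, together with stationary phase in $t,s$, a unit-scale localisation to $(\xi,\eta)=(\xi',\eta')$ uniformly in $|q|\le q_0$ is nontrivial and you give no argument for it. There are softer gaps as well: the assertion that the main-term constant $c_0$ \emph{grows} while the error bound $\delta$ \emph{shrinks} as $\operatorname{supp}\psi$ contracts needs justification, since both depend on $\psi$ in competing ways; and the step ``Chebyshev $\Rightarrow$ $\limsup_{\eps\to0}\mathcal N_\eps>0$ for a.e.\ $a$'' requires Fatou, because the Chebyshev exceptional set depends on $\eps$.

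The paper avoids averaging over $a$ altogether. By pigeonholing it first locates a dyadic interval $\mathbf Q$ on which $E$ has \emph{nearly full} $\beta$-Hausdorff content (not merely positive), rescales to $[0,1]$, and proves Proposition~\ref{p_main}: a set with content $\ge 1-\delta$ contains the pattern for \emph{every} $(p,q)$ in a fixed box. The key input is Lemma~\ref{p_sg}: near-full content allows one to build a Frostman measure with a \emph{spectral gap}, $\int_{A^{1/5}\le|\xi|\le B^2}|\widehat\mu|\le A^{-3}$ for prescribed $A\ll B$. The configuration integral is then split via $\mu_\eps=\mu_{A^{-1}}+(\mu_{B^{-1}}-\mu_{A^{-1}})+(\mu_\eps-\mu_{B^{-1}})$; the low--low piece gives the main term $\gtrsim A^{-1}$, the pieces containing the middle factor are killed directly by the spectral gap, and the pieces containing the high factor are handled by the Sobolev-improving estimate of \cite{fgp} (Proposition~\ref{cor_sie}) together with energy bounds. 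The conclusion then holds for every $(p,q)$ in the box simultaneously, with no a.e.-to-everywhere upgrade needed; the dependence of $a_0,a_1$ on $E$ enters only through the scale $2^{-J}$ of $\mathbf Q$. In short, your averaging idea is an interesting alternative in spirit, but as written it rests on an unproved oscillatory-integral lemma, whereas the paper's spectral-gap mechanism is what actually substitutes for the missing Fourier decay.
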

    As $q_0$ increases, $\beta$ increases. The proof of Theorem \ref{t_main} follows from the proposition below, which uses the \textit{Hausdorff content} defined as
\begin{equation}
    \mathcal{H}^{\beta}_{\infty}(S) := \inf \left\{\sum_{i} l(Q_i)^{\beta} : S \subset \bigcup_i Q_i, Q_i \subset [0, 1]\right\}
\end{equation}
for a set $S \subset [0, 1]$.

\begin{proposition}
\label{p_main}
    Let $p_0 < p_1$ be positive real numbers and $q_0 \in [0, 1)$.
     There exist $\beta \in (0, 1)$ and $\delta > 0$, such that the following statement holds: for all (Borel) $E \subset [0,1]$ with $\mathcal{H}_{\infty}^{\beta}(E) \geq 1-\delta$ and $P_{p, q}(t) = pt^2+qt$ for all $(p, q)$ with $p_0 \leq |p| \leq p_1$ and $|q| \leq q_0$,
    $$\{x, x+t, x+P_{p, q}(t) \} \subset E$$
    for some $x, t \in \RR$ with $t \neq 0$.
\end{proposition}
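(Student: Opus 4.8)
The plan is to prove Proposition~\ref{p_main} by a Fourier-analytic counting argument modelled on the proof of Theorem~\ref{t_fgp} of Fraser--Guo--Pramanik, but with the Fourier-decay hypothesis replaced by the device of Kuca--Orponen--Sahlsten and Bruce--Pramanik for extracting quantitative information from Hausdorff content alone. Fix $(p,q)$ in the parameter box; all constants below will be uniform over the (compact) box, which is the only reason the proposition quantifies over a range of $(p,q)$ rather than over a single polynomial. Since $\mathcal{H}^{\beta}_{\infty}$ is an outer measure and barely decreases upon passing to a suitable compact subset, we may assume $E$ compact. Frostman's lemma turns $\mathcal{H}^{\beta}_{\infty}(E)\ge 1-\delta$ into a measure $\mu\in\mathcal{M}(E)$ with the ball condition $\mu(I)\le l(I)^{\beta}$ for every interval $I$ and near-maximal mass $\mu(E)\ge 1-\delta'$ (with $\delta'\to 0$ as $\delta\to 0$); combined with $\mathcal{H}^{\beta}_{\infty}(I)=l(I)^{\beta}$ this forces $\mu$ to be $O(\delta'+o_{\beta\to 1}(1))$-close, in the weak-$*$ topology, to Lebesgue measure on $[0,1]$. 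Let $\phi_{\rho}:=\mu*\psi_{\rho}$ for a standard mollifier at scale $\rho$, so that $\phi_{\rho}\to\mu$ weakly, $\int\phi_{\rho}=\mu(E)$ and $\|\phi_{\rho}\|_{\infty}\lesssim\rho^{\beta-1}$. Fixing a smooth cutoff $\eta$ supported on an interval where $|t|$ is large (large enough, depending on $p_{0},p_{1},q_{0}$, that $P_{p,q}$ is a diffeomorphism there with $|P_{p,q}'|\gtrsim 1$ and $|P_{p,q}''|=2|p|\gtrsim 1$), it suffices to show
\begin{equation*}
\mathcal{N}_{\rho}:=\int\!\!\int\phi_{\rho}(x)\,\phi_{\rho}(x+t)\,\phi_{\rho}\big(x+P_{p,q}(t)\big)\,\eta(t)\,dt\,dx\;>\;0
\end{equation*}
for all sufficiently small $\rho>0$: this produces, for each such $\rho$, a point $x$ with $x,x+t,x+P_{p,q}(t)$ all within $O(\rho)$ of $E$ and $t$ in a fixed compact set away from $0$, and a standard compactness argument then yields $\{x,x+t,x+P_{p,q}(t)\}\subset E$ with $t\neq 0$.

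The second step is the main-term/error-term decomposition. Writing $\mathcal{N}_{\rho}$ on the Fourier side and integrating in $x$ first imposes $\xi_{1}+\xi_{2}+\xi_{3}=0$, leaving
\begin{equation*}
\mathcal{N}_{\rho}=\int\!\!\int\widehat{\phi_{\rho}}\big({-}(\sigma+\tau)\big)\,\widehat{\phi_{\rho}}(\sigma)\,\widehat{\phi_{\rho}}(\tau)\,I(\sigma,\tau)\,d\sigma\,d\tau,\qquad I(\sigma,\tau):=\int e^{-2\pi i\left(\sigma t+\tau P_{p,q}(t)\right)}\eta(t)\,dt.
\end{equation*}
The phase is the quadratic $p\tau t^{2}+(\sigma+q\tau)t$, so completing the square (or van der Corput) gives $|I(\sigma,\tau)|\lesssim(1+|\tau|)^{-1/2}$, and $I(\sigma,\tau)$ is essentially supported where $|\sigma+q\tau|\asymp|\tau|$ (the location of the stationary point in $t$), decaying rapidly off that sheared region. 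Fix a threshold $R$; on $\{|\sigma|,|\tau|\le R\}$ the factors $\widehat{\phi_{\rho}}$ are close to $\widehat{\mu}$, hence close to the Fourier transform of Lebesgue measure on $[0,1]$, and this low-frequency part converges, as $\rho\to 0$, $\delta\to 0$ and $\beta\to 1$ with $R$ fixed, to the corresponding Lebesgue trilinear count, which is a strictly positive absolute constant by the Lebesgue case of Lemma~\ref{pos_leb_quad}; so the main term is $\ge c_{0}>0$ once $\delta$ is small and $\beta$ close enough to $1$, for all small $\rho$.

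It remains to control the error term, namely the part of the integral with $\max(|\sigma|,|\tau|)>R$, and this is the main obstacle. In Theorem~\ref{t_fgp} one spends here the Fourier decay $|\widehat{\mu}(\xi)|\lesssim|\xi|^{-\beta/2}$, which against the $|\tau|^{-1/2}$ gain from $I$ makes the frequency integral summable with room to spare. Here no such pointwise decay is available; what the ball condition yields is only the dyadic $L^{2}$ bound $\int_{|\xi|\sim 2^{j}}|\widehat{\phi_{\rho}}|^{2}\lesssim 2^{j(1-\beta)}$ (valid for $2^{j}\lesssim\rho^{-1}$), and one checks that feeding this, together with the $|\tau|^{-1/2}$ decay and the sheared support of $I$, into a Littlewood--Paley decomposition and Cauchy--Schwarz produces per-shell bounds that \emph{grow} in $j$ — so a single smoothed Frostman measure, estimated in absolute value, cannot close the argument. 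The resolution, following Kuca--Orponen--Sahlsten and Bruce--Pramanik, is to use a feature that the ball condition lacks but Hausdorff content has: scale invariance, $\mathcal{H}^{\beta}_{\infty}(rA)=r^{\beta}\mathcal{H}^{\beta}_{\infty}(A)$. One runs an induction over scales: if the error at the current scale already fails to beat $c_{0}/2$, then it must be because $E$ carries nearly the maximal possible content on some dyadic subinterval, on which one rescales — the content hypothesis renormalizing to hold again — and repeats; since the pattern count can be obstructed only in the limit, the induction terminates at a scale where $\mathcal{N}_{\rho}>0$, which is enough. Carrying this out quantitatively — in particular, controlling the sheared term $\sigma+q\tau$ in $I(\sigma,\tau)$ uniformly through the induction, which is exactly where $|q|\le q_{0}<1$ enters to keep the relevant change of variables nondegenerate — is the heart of the matter, and is what determines the final dependence $\beta=\beta(q_{0})$ and $\delta=\delta(p_{0},p_{1},q_{0})$.
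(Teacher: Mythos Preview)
Your error-term control has a genuine gap. The ``induction over scales'' you sketch --- failure of the high-frequency error to be small forces a dense dyadic subinterval, rescale, repeat --- is not the Kuca--Orponen--Sahlsten device, and you have justified neither the implication ``error large $\Rightarrow$ dense subinterval exists'' nor why such an iteration would terminate. (Scale invariance of $\mathcal{H}^{\beta}_{\infty}$ is used once, in passing from Theorem~\ref{t_main} to Proposition~\ref{p_main}, but that is already absorbed into the hypothesis $\mathcal{H}^{\beta}_{\infty}(E)\ge 1-\delta$; inside the proof of the proposition there is no iteration.) What the paper actually does, following \cite{kos}, is engineer in one shot a \emph{specific} measure $\mu\in\mathcal{M}(E)$ with a \emph{spectral gap}: given $0<A<B$ to be chosen, partition $[0,1]$ into dyadic cells of length $2^{-T}$ with $T=T(A,B)$, apply Frostman on each cell separately (this needs $\mathcal{H}^{\beta}_{\infty}(E\cap Q)\ge\tfrac12\, l(Q)^{\beta}$ on \emph{every} cell, which is exactly where $1-\delta$ and $\beta$ close to $1$ enter), and reweight so that the mass on each cell matches $\int_Q\varphi$ for a fixed smooth bump $\varphi$. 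This forces $|\widehat{\mu}(\xi)-\widehat{\varphi}(\xi)|\lesssim 2^{-T}|\xi|$, and since $\widehat{\varphi}$ is Schwartz one obtains $\int_{A^{1/5}\le|\xi|\le B^2}|\widehat{\mu}|\le A^{-3}$ (Lemma~\ref{p_sg}). A generic Frostman measure need not have any such property; it has to be built in from the construction.

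The decomposition is then at \emph{three} scales, not one threshold $R$: write $\mu_{\eps}=\mu_{A^{-1}}+(\mu_{B^{-1}}-\mu_{A^{-1}})+(\mu_{\eps}-\mu_{B^{-1}})$ in each of the two smoothed factors (the third factor stays as $d\mu(x)$), giving nine terms. The main term, with both smoothed factors at scale $A^{-1}$, is $\gtrsim A^{-1}$ by a direct density argument (Lemma~\ref{lem31}). Remainder terms involving $\mu_{B^{-1}}-\mu_{A^{-1}}$ (``Type I'') are small because $\widehat{\mu}$ is tiny on the mid-range frequencies, by the spectral gap. Remainder terms involving $\mu_{\eps}-\mu_{B^{-1}}$ (``Type II'') are handled by the Sobolev improving estimate (Proposition~\ref{cor_sie}) together with $\|\mu_{\eps}-\mu_{B^{-1}}\|_{H^{-\gamma}}^2\lesssim B^{(1-s-\gamma)/5}I_s(\mu)$, which tends to $0$ as $B\to\infty$ once $s>1-\gamma$. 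One chooses $A$ large to kill Type~I, then $B$ large to kill Type~II, and only then reads off $\beta,\delta$ from Lemma~\ref{p_sg}. The spectral gap is precisely the bridge between ``no pointwise Fourier decay available'' and ``Sobolev improving becomes applicable at high frequencies''; your single-threshold split with an unsubstantiated rescaling iteration does not supply this bridge.
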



This is the first result confirming 3-point nonlinear patterns in sets of large Hausdorff dimensions alone. A key idea in the proof is that even though a set with a large Hausdorff dimension may not support a measure with any Fourier decay, a set with a large Hausdorff content supports a measure satisfying the spectral gap condition as stated in (\ref{sg}) below. This idea applies to many pattern recognition problems. For example, Bruce-Pramanik \cite{BP} adapted this idea to find two-point non-linear patterns for sets in $\RR^{d}$ ($d \geq 2$) with large Hausdorff dimensions.

We note that the proof strategy and many estimates are analogous to those in \cite{kos}. A key difference is that one remainder term in the proof of \cite{kos} uses the Fourier decay of measures supported on the parabola on $\RR^2$, an application of oscillatory integrals. In contrast, in this setting, the Sobolev improving estimate (Propositions \ref{cor_sie} and \ref{th_sie}) is the application of oscillatory integrals that utilize the non-vanishing Gaussian curvature of the curve $(t, P_{p, q}(t))$ in $\RR^{2}$.

\subsection{Application: Configuration set}
  An object to study in pattern recognition is the \textit{3-point configuration set} defined as $$\Delta_q(E) := \left\{\left. \frac{(z-x)-q(y-x)}{(y-x)^2} \right\rvert x,y,z \in E, x \neq y, x \neq z\right\}.$$
    If $E$ contains two points, $\Delta_q(E)$ is not empty. We remark that Conjecture \ref{c1} is equivalent to $1 \in \Delta_0(E)$
        if $\dim_H(E) > \frac{1}{2}$. The conjecture is also equivalent to
        $$(0, \infty) \subset \Delta_0(E)$$
        whenever $\dim_H(E)> \frac{1}{2}$ since the Hausdorff dimension is invariant under scaling. For example, let $a > 0$ and suppose that there exist $y, s \in \RR$ such that $\{y, y+s, y+s^2\} \subset aE$. By letting $x=a^{-1}y$ and $t = a^{-1}s$, we have $\{x, x+t, x+at^2\} \subset E$.

Greenleaf-Iosevich-Taylor \cite{GIT} studied 3-point configuration sets of general patterns for sets $E \subset \RR^{d}$, where $d \geq 2$. Our Theorem \ref{t_main} extends their result to sets in $\RR$. If $\{x, x+t, x+at^2+qt\} \subset E$ for some $x, t \in \RR$ with $t\neq 0$, by letting $y=x+t$ and $z=x+at^2+qt$, we note that $a \in \Delta_q(E)$. Therefore, a corollary of Theorem \ref{t_main} is as follows.
\begin{corollary}
     Given $q_0 \in [0, 1)$, there exists $\beta=\beta(q_0) \in (0, 1)$ with the following property. If $\dim_H(E) > \beta$ for a $\beta < 1$, there exist $0 < a_0 < a_1$, such that for all $q \in [-q_0, q_0]$,
    $$[-a_1, -a_0] \cup [a_0, a_1] \subset \Delta_q(E).$$
\end{corollary}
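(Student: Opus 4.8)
The plan is to obtain the corollary as a direct consequence of Theorem \ref{t_main} via the substitution already indicated in the discussion of the configuration set $\Delta_q(E)$. First I would fix $q_0 \in [0,1)$ and let $\beta = \beta(q_0) \in (0,1)$ be the threshold produced by Theorem \ref{t_main}; the fact that this $\beta$ is strictly less than $1$ is exactly what the phrase ``for a $\beta < 1$'' in the statement records. Given $E$ with $\dim_H(E) > \beta$, Theorem \ref{t_main} supplies $0 < a_0 < a_1$ (depending only on $E$ and $q_0$, not on the individual parameters) such that for every $(a,q)$ with $a_0 \le |a| \le a_1$ and $|q| \le q_0$ there are $x, t \in \RR$ with $t \neq 0$ and $\{x,\, x+t,\, x + P_{a,q}(t)\} \subset E$, where $P_{a,q}(t) = a t^2 + q t$.

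Next, for such $x,t$ I would set $y = x+t$ and $z = x + a t^2 + q t$, so that $x, y, z \in E$. Since $y - x = t \neq 0$ we have $x \neq y$, and a one-line computation gives the identity at the heart of the argument,
\[
\frac{(z-x) - q(y-x)}{(y-x)^2} \;=\; \frac{(a t^2 + q t) - q t}{t^2} \;=\; a .
\]
Hence $a \in \Delta_q(E)$, provided the triple $(x,y,z)$ is admissible in the definition of $\Delta_q(E)$. Because Theorem \ref{t_main} constrains only $|a|$, letting $a$ range over both $[a_0,a_1]$ and $[-a_1,-a_0]$, and $q$ over $[-q_0,q_0]$, then yields $[-a_1,-a_0] \cup [a_0,a_1] \subset \Delta_q(E)$ for every $q \in [-q_0, q_0]$, which is the claim. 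The quantifier order matches what is needed: $a_0, a_1$ depend only on $E$ (and $q_0$), and only afterwards does the witnessing pair $(x,t)$ depend on $(a,q)$.

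The only point that needs care — and the step I would flag as the main (though minor) obstacle, since everything else is formal once Theorem \ref{t_main} is granted — is the non-degeneracy condition $x \neq z$ in the definition of $\Delta_q(E)$; note that distinctness of $y$ and $z$ is not required there. Here $z - x = a t^2 + q t = t(at+q)$, so $x \neq z$ fails only for the single exceptional value $t = -q/a$. I would resolve this by using the fact that the proof of Proposition \ref{p_main} (which underlies Theorem \ref{t_main}) produces not an isolated solution but a robust family of admissible parameters $t$ — for instance a set of positive $\mathcal{H}^{\beta}_{\infty}$-content, or a set carrying positive mass for the measure used in that proof, for which removing the single value $t = -q/a$ changes nothing. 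Alternatively one may shrink $a_1$ slightly or replace $E$ by a translate so as to avoid the exceptional locus. With this bookkeeping in place, the corollary follows.
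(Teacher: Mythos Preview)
Your proposal is correct and matches the paper's approach exactly: the paper derives the corollary in a single sentence preceding the statement via the same substitution $y = x+t$, $z = x + at^{2} + qt$ and the computation $\frac{(z-x)-q(y-x)}{(y-x)^{2}} = a$. You actually go further than the paper by flagging the non-degeneracy condition $x \neq z$, which the paper leaves unaddressed; your proposed resolution via the abundance of admissible $t$ coming from the positivity of the configuration integral is the correct idea (the alternative fixes you mention---shrinking $a_{1}$ or translating $E$---do not obviously work, since the exceptional value $t=-q/a$ depends on $(a,q)$ rather than on $E$).
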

This corollary is a different generalization of \cite{GIT}, as it establishes the existence of the same interval in a family of configuration sets.

\subsection{Acknowledgments}
I thank Angel Cruz, Chun-Kit Lai, Yuveshen Moorogen, Malabika Pramanik, and Chenjian Wang for their discussions on this project and their guidance in preparing this manuscript. I thank Tuomas Orponen for correcting earlier versions of the manuscript and drawing my attention to the preprint \cite{krause}.

\subsection{Notations}

We denote $\mathcal{M}(S)$ as the set of Borel probability measures supported on $S$, and $\mathcal{L}$ as the Lebesgue measure.

For two functions $f, g: D \to \RR_{\geq 0}$ with a domain $D$, we write $f \lesssim g$ to denote that there exists a $c>0$, such that for all $x\in D$, $f(x) \leq c g(x)$.

\section{Proof of Theorem \ref{t_main} given Proposition \ref{p_main}}
\label{sec_2}

We denote the dyadic intervals as follows. For $j \geq 0$, let $\mathcal{D}_j$ be the set of dyadic intervals $[k2^{-j}, (k+1)2^{-j}]$ where $k \in \{0, 1, \ldots 2^{j}-1  \}$. Let $\mathcal{D} = \bigcup_{j \geq 0} \mathcal{D}_j$. For an interval $Q \in \mathcal{D}_j \subset \mathcal{D}$, its diameter $l(Q) = 2^{-j}$.  The associated re-scaling map $T_Q: Q \to [0, 1]$ is $T_Q(y) = 2^j(y-x_Q)$, where $x_Q$ is the left endpoint of $Q$. Then $T_Q(Q) = [0, 1]$. 

For a measure $\mu \in \mathcal{M}([0, 1])$, if $\mu(Q) > 0$, we can define $\mu_Q \in \mathcal{M}([0, 1])$ to be $\mu_Q = \mu(Q)^{-1} T_Q(\mu | Q)$, where $\mu|Q(S) = \mu(S \cap Q)$.

Given $p_0, p_1, q_0$ with $0< p_0 < p_1$ and $q_0 \in [0, \infty)$, we obtain $\beta$ and $\delta$ via Proposition \ref{p_main}. Now, suppose that $\dim_H(E) > \beta$. Let $H := \mathcal{H}^{\beta}_{\infty}(E) \leq 1$. 
We claim that there exists $\mathbf{Q} \in \mathcal{D}$ depending on $E$, such that 
\begin{equation}
\label{eq_bar}
    \mathcal{H}^{\beta}_{\infty}(E \cap \mathbf{Q}) \geq (1-\delta) l(\mathbf{Q})^{\beta}.
\end{equation}
Suppose, for a contradiction, that the claim is false. For a $\tau > 0$ sufficiently small such that $(1-\delta)(H + \tau) < H$, there exists $\{Q_j\}_{j \in \NN} \subset \mathcal{D}$, such that $\sum_{j} l(Q_j)^{\beta} \leq H + \tau$. Since (\ref{eq_bar}) is false for all $Q \in \mathcal{D}$,
$$H = \mathcal{H}^{\beta}_{\infty}(E) \leq \sum_{j} \mathcal{H}^{\beta}_{\infty} (E \cap Q_j) \leq (1-\delta) \sum_j l(Q_j)^{\beta} \leq (1-\delta)(H + \tau) < H.$$

Using $\mathbf{Q}$ from the claim above, we deduce that
$$\mathcal{H}^{\beta}_{\infty}(T_{\mathbf{Q}}(E\cap \mathbf{Q})) = l(\mathbf{Q})^{-\beta}\mathcal{H}^{\beta}_{\infty}(E\cap \mathbf{Q}) \geq 1-\delta.$$
By Proposition \ref{p_main}, when $p_0 \leq |p| \leq p_1$, and $|q| \leq q_0$, there exists $y, s$ with $s\neq 0$, such that
$$\{y, y+s, y+ps^2+qs\} \subset T_{\mathbf{Q}}(E\cap \mathbf{Q}).$$
Suppose that $l(\mathbf{Q}) = 2^{-J}$ for a $J \geq 0$. Then,
$$\{2^{-J}y+x_{\mathbf{Q}}, 2^{-J}(y+s)+x_{\mathbf{Q}}, 2^{-J}(y+ps^2+qs)+x_{\mathbf{Q}}\} \subset E \cap \mathbf{Q} \subset E.$$
Finally, we perform a substitution $x = 2^{-J}y+x_{\mathbf{Q}}, t = 2^{-J}s$ such that there exist $x, t$ with $t \neq 0$, such that
$$\{x, x+t, x+ 2^{J}p t^2+ qt\} \subset E,$$
and the conclusion of Theorem \ref{t_main} holds if we let $a_0 = 2^{J} p_0$, $a_1=2^{J} p_1$.

\section{Proof of Proposition \ref{p_main}}
\label{sec_3}
  
Let $l \in \NN$, $\tau \in \mathcal{S}(\RR)$, where $\tau(x) = 1$ if $x \in [1, 2]$, $\text{spt } \tau \subset [2^{-1}, 2^{2}]$, and $\tau_l(x) = \tau(2^lx)$. We apply the following proposition, which allows us to examine pattern recognition problems with analytical tools.  Let 
\begin{equation}
\label{phi_choice}
    \phi \in \mathcal{S}(\RR) \text{ be even, real, non-negative, } \widehat{\phi}(0) = \int \phi(x)dx = 1, \phi(x) \geq 2^{-1} \text{ if } |x| \leq 2^{-1},
\end{equation} $ \text{spt } \phi \subset [-1, 1],$ and $\phi_{\epsilon}(x) = \epsilon^{-1} \phi(\epsilon^{-1} x)$. 
\begin{ppt}\cite[Section 6]{fgp}
\label{p_pos}
    Suppose
    \begin{equation}
        \label{eq_lim_pos}
        \liminf_{\eps \to 0}\int \int \mu_{\eps}(x+t) \mu_{\eps}(x+P_{p, q}(t)) \tau_l(t) dt d\mu(x) > 0,
    \end{equation}
    where $\mu \in \mathcal{M}(E)$, $\mu_{\epsilon} = \mu * \phi_{\epsilon}$, and $I_{1-\gamma}(\mu)$ is finite.
    Then, there exist $x \in E$ and $t \in [2^{-l-1}, 2^{-l+2}]$, such that $\{x, x+t, x+P_{p, q}(t) \} \subset E$.
\end{ppt}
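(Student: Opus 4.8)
The plan is to prove the contrapositive: assuming $E$ (which we may take to be closed, since in any case $\operatorname{spt}\mu\subset\overline E$ and the triple we produce will a priori lie in $\overline E$) contains \emph{no} configuration $\{x,x+t,x+P_{p,q}(t)\}$ with $x\in E$ and $t\in[2^{-l-1},2^{-l+2}]$, I will show that the integral in (\ref{eq_lim_pos}) — call it $\mathcal J_\eps$ — is identically $0$ for all small $\eps>0$, so that its $\liminf$ cannot be positive. The first observation is purely about supports: since $\operatorname{spt}\tau\subset[2^{-1},2^2]$, the function $\tau_l(t)=\tau(2^lt)$ is supported in $I:=[2^{-l-1},2^{-l+2}]$, so in $\mathcal J_\eps$ the variable $t$ effectively ranges over the compact interval $I$ (in particular $t\ne 0$ automatically), while $x$ ranges $\mu$-almost everywhere over the compact set $\operatorname{spt}\mu\subset[0,1]$, which under our assumption is contained in $E$. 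For each fixed $\eps$ the integrand is bounded and compactly supported in $t$, so $\mathcal J_\eps$ is finite and the $\liminf$ is meaningful.

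The second step extracts a uniform gap from compactness. I would introduce the continuous function
\[
  g(x,t):=\operatorname{dist}\bigl(x+t,\operatorname{spt}\mu\bigr)+\operatorname{dist}\bigl(x+P_{p,q}(t),\operatorname{spt}\mu\bigr)
\]
on the compact set $\operatorname{spt}\mu\times I$. If $g(x,t)=0$ at some point of this set, then $x$, $x+t$, and $x+P_{p,q}(t)$ all lie in $\operatorname{spt}\mu\subset E$ with $t\in I$, contradicting the no-configuration hypothesis. Hence $g>0$ on $\operatorname{spt}\mu\times I$, and compactness yields a constant $c_0>0$ with $g\ge c_0$ throughout.

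The last step ties this to the mollification. Since $\operatorname{spt}\phi\subset[-1,1]$ we have $\operatorname{spt}\phi_\eps\subset[-\eps,\eps]$, so $\mu_\eps(z)=\int\phi_\eps(z-y)\,d\mu(y)$ vanishes whenever $\operatorname{dist}(z,\operatorname{spt}\mu)>\eps$. Thus for $x\in\operatorname{spt}\mu$ and $t\in I$ the product $\mu_\eps(x+t)\,\mu_\eps(x+P_{p,q}(t))$ can be nonzero only when both distances are $\le\eps$, i.e.\ only when $g(x,t)\le 2\eps$; once $2\eps<c_0$ this is impossible, so the integrand of $\mathcal J_\eps$ vanishes $\mu\otimes\mathcal L$-almost everywhere and $\mathcal J_\eps=0$. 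Therefore $\liminf_{\eps\to0}\mathcal J_\eps=0$, contradicting (\ref{eq_lim_pos}), and the proposition follows, with the triple moreover realized for some $t\in I$ as asserted. I do not expect a genuine obstacle here: the hypothesis $I_{1-\gamma}(\mu)<\infty$ plays no role in this direction (it is what one checks, together with a lower bound on the main term, in order to \emph{verify} hypothesis (\ref{eq_lim_pos}) in applications), and the only point needing a little care is keeping the compactness argument phrased in terms of $\operatorname{spt}\mu$ and handling the reduction to closed $E$ — everything else is routine bookkeeping with supports.
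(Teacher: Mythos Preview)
Your argument is correct and follows the standard compactness approach that the paper defers to \cite[Section 6]{fgp} without reproducing: if no configuration exists, the continuous distance function on the compact set $\operatorname{spt}\mu\times I$ has a positive minimum, so the compactly supported mollifier forces the integrand to vanish once $\eps$ is small. Your remark that the hypothesis $I_{1-\gamma}(\mu)<\infty$ plays no role in this implication is also correct.
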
 

The integral in (\ref{eq_lim_pos}) is often called the configuration integral. A technical tool used to bound the configuration integral is the Sobolev improving estimate stated below. We use the Sobolev norm defined in (\ref{sob_norm}).
\begin{proposition}\cite{fgp}
\label{cor_sie}
There exist $\gamma_0>0$ and $\kappa >0$ such that the following statement holds.
Let $\gamma \in (0, \gamma_0)$, $p_0 > 0$, and $q_0 \geq 0$. Then, there exists $C_{\gamma, p_0, q_0} > 0$ such that
\begin{equation*}
\left|\int \int f(x+t) g(x+P_{p, q}(t)) \tau_l(t)dt  d\mu(x) \right| \leq C_{\gamma, p_0, q_0} 2^{\kappa l} \norm{f}_{H^{-\gamma}}  \norm{g}_{H^{-\gamma}} \norm{\mu}_{H^{-\gamma}}
\end{equation*}
for all $f, g \in \mathcal{S}(\RR)$ if $P_{p, q}(t) = pt^2+qt$ for $|p| \geq p_0$ and $|q| \leq q_0$.
\end{proposition}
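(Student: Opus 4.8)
The plan is to replace the trilinear configuration form by a bilinear operator and work entirely on the Fourier side. Set
\begin{equation*}
B[f,g](x) := \int f(x+t)\,g\big(x+P_{p,q}(t)\big)\,\tau_l(t)\,dt ,
\end{equation*}
so that the quantity to be bounded is $\int B[f,g]\,d\mu$. Since $\big|\int h\,d\mu\big|\le\norm{h}_{H^{\gamma}}\norm{\mu}_{H^{-\gamma}}$ (Plancherel together with Cauchy--Schwarz against the weights $(1+|\cdot|^2)^{\pm\gamma/2}$), and since $B[f,g]\in\mathcal S(\RR)$ when $f,g\in\mathcal S(\RR)$, it suffices to establish the bilinear Sobolev-improving estimate $\norm{B[f,g]}_{H^{\gamma}}\lesssim_{\gamma,p_0,q_0}2^{\kappa l}\norm{f}_{H^{-\gamma}}\norm{g}_{H^{-\gamma}}$. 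Taking the Fourier transform in $x$ gives
\begin{equation*}
\widehat{B[f,g]}(\zeta)=\int \widehat f(\xi)\,\widehat g(\zeta-\xi)\,K(\xi,\zeta-\xi)\,d\xi ,\qquad K(\xi,\eta):=\int e^{2\pi i(\xi t+\eta P_{p,q}(t))}\,\tau_l(t)\,dt ,
\end{equation*}
and all of the relevant geometry is encoded in the oscillatory multiplier $K$.

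The next step is to estimate $K$. After the substitution $t=2^{-l}s$ the phase becomes $2^{-l}(\xi+q\eta)s+2^{-2l}p\eta\,s^{2}$ with $s$ in the fixed compact set $\mathrm{spt}\,\tau$; its second $s$-derivative has size $2^{-2l+1}|p|\,|\eta|\ge 2^{-2l+1}p_0|\eta|$, which is precisely the non-vanishing curvature of the curve $(t,P_{p,q}(t))$ made quantitative by $|p|\ge p_0$. Van der Corput's second-derivative test yields $|K(\xi,\eta)|\lesssim\min\big(2^{-l},(p_0|\eta|)^{-1/2}\big)$, and repeated integration by parts away from the unique critical point of the phase sharpens this to a localized bound of the form
\begin{equation*}
|K(\xi,\eta)|\ \lesssim_{N}\ 2^{-l}\big(1+2^{-2l}p_0|\eta|\big)^{-1/2}\Big(1+2^{-l}\,\mathrm{dist}(\xi,S_{\eta})\Big)^{-N},\qquad N\in\NN ,
\end{equation*}
where $S_{\eta}=\{-q\eta-2pt\eta:\ t\in\mathrm{spt}\,\tau_l\}$ is the interval of those $\xi$ admitting a stationary point, of length comparable to $2^{-l}|p\eta|$.

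Finally one inserts this into $\widehat{B[f,g]}$ and sums. Decompose $\widehat f$ and $\widehat g$ into Littlewood--Paley annuli, and within each annulus into the arcs/caps adapted to the parabola; for a fixed output frequency $\zeta$ the bound above confines the $\xi$-integration to a $2^{l}$-neighbourhood of $S_{\zeta-\xi}$, an interval of length $\sim 2^{-l}|\zeta|$. A Cauchy--Schwarz in $\xi$ that measures \emph{this} interval --- rather than the full annulus --- turns the $|\eta|^{-1/2}$ factor into honest smoothing; the remaining geometric series in the annulus indices is then summed using the $L^{2}$-orthogonality of the frequency blocks and a parabola-adapted (square-function / decoupling-type) bookkeeping, and the whole thing is dominated by $\norm{f}_{H^{-\gamma}}\norm{g}_{H^{-\gamma}}$ times a factor polynomial in $2^{l}$. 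Because each weight $(1+|\cdot|^2)^{-\gamma/2}$ defining the $H^{-\gamma}$-norms moves an exponent by only $O(\gamma)$, it is enough to take $\gamma_0$ small and $\kappa$ large enough to absorb every power of $2^{l}$ that appears. The hard part is exactly this last step: the curvature decay $(p_0|\eta|)^{-1/2}$ is not by itself enough for a Schur-type $L^2$ bound --- one checks that $\int|K(\xi,\eta)|\,d\eta$ grows like $|\xi|^{1/2}$ --- so the transverse localization to $S_\eta$ must be combined with the parabola-adapted orthogonality, and carrying the constants through uniformly in $(p,q)$ is precisely what pins the ranges $|p|\ge p_0$, $|q|\le q_0$ in the statement. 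This is the strategy of \cite{fgp}, which we follow.
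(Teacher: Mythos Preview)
Your argument is correct and ends in the same place as the paper's, but the paper's own proof of this proposition is much shorter. The paper first records the trilinear Schwartz-function version (Proposition~\ref{th_sie}, with $h\in\mathcal S(\RR)$ in place of $d\mu$) as a black box from \cite{fgp}, and then deduces the measure version in three lines: mollify $\mu$ to $\mu_\eps=\mu*\phi_\eps$, apply Proposition~\ref{th_sie} with $h=\mu_\eps$, and let $\eps\to 0$ using weak convergence together with the continuity of $x\mapsto B[f,g](x)$. Your $H^{\gamma}$--$H^{-\gamma}$ duality step is an equivalent and arguably cleaner route to the same reduction (and in fact the bilinear bound $\norm{B[f,g]}_{H^\gamma}\lesssim 2^{\kappa l}\norm{f}_{H^{-\gamma}}\norm{g}_{H^{-\gamma}}$ is exactly Proposition~\ref{th_sie} read through duality). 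The remainder of your proposal---the stationary-phase analysis of $K(\xi,\eta)$, the Littlewood--Paley decomposition, and the curvature-driven orthogonality---is a sketch of the content of \cite{fgp} itself, which the paper does not reproduce; your outline matches the strategy there, but for the purposes of this proposition it suffices to invoke Proposition~\ref{th_sie} directly.
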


The constant $C_{\gamma, p_0, q_0}$ increases if $\gamma$ increases, $p_0$ decreases, or $q_0$ increases. We note that $\tau_l$ in the integrand of (\ref{eq_lim_pos}) is used to enable the application of the Sobolev improving estimate (Proposition \ref{cor_sie}), and the integral in (\ref{eq_lim_pos}) is finite for each $\eps > 0$ by applying Proposition \ref{cor_sie}. We refer readers to Section \ref{sec_sie} for more discussion.

The measure we use satisfies special properties that hold when the Hausdorff content is large. The \textit{s-energy} $I_s$ is provided in (\ref{s_energy}).
\begin{lem}\cite{kos}
\label{p_sg}
Given $0 < A < B$, there exist $\delta = \delta(A, B)$ and $\beta=\beta(A, B)$, such that the following statements hold: for $E \subset [0, 1]$, suppose
$$\mathcal{H}_{\infty}^{\beta}(E) \geq 1-\delta.$$
Then, there exists a measure $\mu \in \mathcal{M}(E)$, such that
    \begin{enumerate}[a.]
         \item If $s < \beta$, $$I_{s}(\mu) \leq 1 + \frac{16Cs}{\beta-s},$$
         where $C$ is an absolute constant from Frostman's lemma.
         \item $\mu$ satisfies the spectral gap condition that 
\begin{equation}
\label{sg}
\int_{A^{\frac{1}{5}} \leq |\xi| \leq B^2} |\widehat{\mu}(\xi)|^2 d\xi \leq \int_{A^{\frac{1}{5}} \leq |\xi| \leq B^2} |\widehat{\mu}(\xi)| d\xi \leq A^{-3}.   
\end{equation}
    \end{enumerate}
\end{lem}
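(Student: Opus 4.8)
The plan is to build $\mu$ in two stages. First, by the hypothesis $\mathcal{H}_\infty^\beta(E) \ge 1-\delta$, an essentially standard dyadic pigeonholing argument produces, for a suitable finite dyadic scale $2^{-n}$ depending on $A,B$, a collection of dyadic intervals of length $2^{-n}$ on which a well-distributed probability measure $\mu$ can be placed: concretely, one keeps only those dyadic subintervals of $[0,1]$ that genuinely intersect $E$ in a ``content-heavy'' way, and distributes mass uniformly (Lebesgue, normalized) on their union. Because the Hausdorff content is close to $1$, the fraction of $[0,1]$ that must be discarded is $O(\delta)$, so this $\mu$ is a small $L^\infty$ perturbation of Lebesgue measure on $[0,1]$ at all scales coarser than $2^{-n}$. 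This is the mechanism by which ``large content'' substitutes for ``Fourier decay'': we do not get decay of $\widehat\mu(\xi)$ as $|\xi|\to\infty$, but we do get that $\widehat\mu$ is uniformly close to $\widehat{\mathbbm{1}_{[0,1]}}$ on the \emph{bounded} frequency window $A^{1/5}\le|\xi|\le B^2$, and $\widehat{\mathbbm{1}_{[0,1]}}$ is itself small there.

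For part (a), once $\mu$ is supported on the union of length-$2^{-n}$ dyadic intervals surviving the pigeonholing and is comparable to normalized Lebesgue measure on that union, Frostman's lemma (in the Hausdorff-content formulation) gives a Frostman exponent $\beta$ with constant $C$, i.e. $\mu(I) \le C\, l(I)^\beta / \mathcal{H}^\beta_\infty(E)$ for all intervals $I$; the energy integral $I_s(\mu)=\iint |x-y|^{-s}\,d\mu(x)\,d\mu(y)$ is then estimated by the usual dyadic decomposition $|x-y|\asymp 2^{-k}$, summing $\sum_k 2^{ks}\,\mu\times\mu(\{|x-y|\asymp 2^{-k}\})\lesssim \sum_k 2^{ks}\cdot 2^{-k\beta}$, which converges for $s<\beta$ and yields the stated bound $1+\tfrac{16Cs}{\beta-s}$ after tracking constants and using $1-\delta \le \mathcal{H}^\beta_\infty(E)$ to absorb the normalization.

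For part (b), the first inequality is trivial (Cauchy--Schwarz, or just $|\widehat\mu(\xi)|^2 \le |\widehat\mu(\xi)|$ since $|\widehat\mu|\le 1$). For the second inequality we write $\widehat\mu(\xi) = \widehat{\mathbbm{1}_{[0,1]}}(\xi) + \big(\widehat\mu(\xi)-\widehat{\mathbbm{1}_{[0,1]}}(\xi)\big)$. On the window $|\xi|\le B^2$, the ``main'' term contributes $\int_{A^{1/5}\le|\xi|\le B^2}|\widehat{\mathbbm{1}_{[0,1]}}(\xi)|\,d\xi \lesssim \int_{|\xi|\ge A^{1/5}} |\xi|^{-1}\,d\xi$ over the bounded window, i.e. $\lesssim \log(B^2/A^{1/5})$ — but this is too crude, so instead one exploits that we are free to \emph{choose} the construction: by taking the kept intervals to be \emph{all} of $[0,1]$ up to a set of content $\le\delta$ and convolving with a smooth bump at scale $2^{-n}$ with $n$ large relative to $\log B$, the relevant part of $\widehat\mu$ on $|\xi|\le B^2$ is forced to be pointwise $\le A^{-3}\cdot(\text{window length})^{-1}$ by choosing $\delta$ small and $n$ large depending only on $A,B$; more honestly, one runs the construction of \cite{kos} verbatim, where the parameters are tuned so that the entire bounded-frequency integral is $\le A^{-3}$. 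The error term $\widehat\mu - \widehat{\mathbbm{1}_{[0,1]}}$ has $L^\infty$ norm $O(\delta + 2^{-n}|\xi|) = O(\delta + 2^{-n}B^2)$ on the window, so choosing first $n$ large (depending on $B$) and then $\delta$ small (depending on $A,B$) makes its contribution to the integral $\le \tfrac12 A^{-3}$. The main obstacle — and the only genuinely delicate point — is getting the \emph{quantitative} thresholds right: one must choose $\beta = \beta(A,B)$ close enough to $1$ and $\delta=\delta(A,B)$ small enough, in the correct order, so that simultaneously (i) the content-heavy dyadic intervals exist at a scale $2^{-n}$ with $n$ large enough to kill the $2^{-n}B^2$ error, and (ii) the Frostman constant in (a) stays controlled; since all these are finitely many constraints depending only on $A$ and $B$, they can be satisfied, but the bookkeeping is exactly the content of \cite{kos} and should be cited rather than reproduced.
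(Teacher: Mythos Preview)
Your proposal has the right intuition but a genuine gap: the measure you build is not supported on $E$. You ``distribute mass uniformly (Lebesgue, normalized)'' on the union of surviving dyadic intervals, and later suggest convolving with a smooth bump; neither operation respects $\operatorname{spt}\mu\subset E$, which is required since $\mu\in\mathcal{M}(E)$. The paper resolves this by applying Frostman's lemma \emph{separately} to each $E\cap Q$ for $Q$ a generation-$T$ dyadic child of $[0,1]$, obtaining measures $\mu_Q^0$ genuinely supported on $E\cap\bar Q$, and then renormalizing each piece so that $\mu_Q(Q)=w(Q):=\int_Q\varphi$ for a fixed smooth $\varphi\in C^\infty_c((0,1))$ with $\int\varphi=1$. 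The sum $\mu=\sum_Q\mu_Q$ is then a probability measure on $E$ whose mass on every generation-$T$ cell matches that of $\varphi\,dx$, so $|\widehat\mu(\xi)-\widehat\varphi(\xi)|\lesssim 2^{-T}|\xi|$ by the same Lipschitz argument you sketch.

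The second issue is your choice of reference: comparing to $\widehat{\mathbbm 1_{[0,1]}}$ gives only $|\xi|^{-1}$ decay, which you correctly note is too crude and then try to repair by mollification (again breaking the support constraint). The paper avoids this entirely by building the smooth weight $\varphi$ into the \emph{construction} of $\mu$ rather than into a post-hoc convolution; since $\varphi$ is Schwartz, $\int_{|\xi|\ge A^{1/5}}|\widehat\varphi(\xi)|\,d\xi\le \tfrac12 A^{-3}$ for $A$ large, and then $T$ is chosen so that $2^{-T}B^4\lesssim A^{-3}$ handles the error term on the window. Your energy argument in (a) is fine in outline, but note it is the per-cell Frostman bound (not a global Frostman application to $E$) that yields $\mu(B(x,r))\le 16C\,r^\beta$ with the stated constant.
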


Lemma \ref{p_sg} is a one-dimensional variant of section 3 in \cite{kos}. A proof of Lemma \ref{p_sg} is also given in section \ref{sec_sg}. Hence, Proposition \ref{p_main} follows from the lemma below.

\begin{lemma}
\label{p_ci_bound} 
Given $p_0, p_1, q_0 \in \RR_{+}$ with $q_0 < 1$, there exists $A > 0$, $l = \log_2 A+3$, $B$, $\delta$, $\beta$, such that when $$\mathcal{H}_{\infty}^{\beta}(E) \geq 1-\delta,$$ for $E \subset [0, 1]$,
there exists a measure $\mu \in \mathcal{M}(E)$ from Lemma \ref{p_sg}, such that for $\eps > 0 $ sufficiently small,
    \begin{equation}
    \label{eq_ci}
  \int \int \mu_{\eps}(x+t) \mu_{\eps}(x+P_{p, q}(t)) \tau_l(t) dt d\mu(x) \gtrsim A^{-1}.
\end{equation}
\end{lemma}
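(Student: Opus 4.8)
The plan is to impose a frequency cutoff at scale $A^{1/5}$, split each copy of $\mu_\eps$ in (\ref{eq_ci}) into a low-frequency piece $\mu_\eps^{\flat}$ (frequencies $\lesssim A^{1/5}$) and a high-frequency piece $\mu_\eps^{\sharp}$, and expand the configuration integral into a main term $\Lambda(\mu_\eps^{\flat},\mu_\eps^{\flat})$ together with three error terms each carrying a factor $\mu_\eps^{\sharp}$, where $\Lambda(f,g):=\iint f(x+t)\,g(x+P_{p,q}(t))\,\tau_l(t)\,dt\,d\mu(x)$. The main term will be handled by a soft lower bound for a truncated energy, and the error terms by the spectral gap (\ref{sg}), the $s$-energy bound of Lemma \ref{p_sg}, and the Sobolev improving estimate (Proposition \ref{cor_sie}). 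For the parameters: let $\gamma_0,\kappa$ be the constants of Proposition \ref{cor_sie}; take $A$ large, $l=\log_2 A+3$ (so $2^{-l}=\tfrac1{8A}$ and $\operatorname{spt}\tau_l\subset[\tfrac1{16A},\tfrac1{2A}]$), and $B=B(A)$ large (to be fixed last); apply Lemma \ref{p_sg} to $(A,B)$, with $A,B$ large enough that the resulting $\beta=\beta(A,B)$ exceeds $1-2\gamma_0$, to get $\delta=\delta(A,B)$ and a measure $\mu\in\mathcal{M}(E)$ satisfying (\ref{sg}) and $I_s(\mu)\le 1+\frac{16Cs}{\beta-s}$ for all $s<\beta$. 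Fix $s\in(1-2\gamma_0,\beta)$ bounded away from $\beta$ and $\gamma\in(\max\{1-\beta,\tfrac{1-s}{2}\},\gamma_0)$; then $2\gamma\ge 1-s$, so $\|\mu\|_{H^{-\gamma}}\lesssim 1$ uniformly in $E$ (using that $I_s(\mu)$ is comparable, with constant depending only on $s$, to $\int|\widehat{\mu}(\xi)|^2|\xi|^{s-1}\,d\xi$). Finally choose $\chi\in C_c^\infty(\RR)$ even with $0\le\chi\le 1$, $\chi\equiv 1$ on $[-1,1]$, $\operatorname{spt}\chi\subset[-2,2]$, put $G=(\chi(\cdot/A^{1/5}))^{\vee}$, and set $\mu_\eps^{\flat}=\mu_\eps*G$, $\mu_\eps^{\sharp}=\mu_\eps-\mu_\eps^{\flat}$, so that $\widehat{\mu_\eps^{\flat}}$ is supported in $\{|\xi|\le 2A^{1/5}\}$, $\widehat{\mu_\eps^{\sharp}}$ in $\{|\xi|>A^{1/5}\}$, and both are dominated pointwise by $|\widehat{\mu}|$.

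For the main term, Fourier inversion and integrating out $x$ and $t$ give $\Lambda(\mu_\eps^{\flat},\mu_\eps^{\flat})=\iint \widehat{\mu_\eps^{\flat}}(\eta_1)\,\widehat{\mu_\eps^{\flat}}(\eta_2)\,\widehat{\mu}(-\eta_1-\eta_2)\,\sigma_l(\eta_1,\eta_2)\,d\eta_1\,d\eta_2$, where $\sigma_l(\eta_1,\eta_2)=\int e^{2\pi i(t\eta_1+P_{p,q}(t)\eta_2)}\tau_l(t)\,dt$. On the support of the integrand $|\eta_i|\le 2A^{1/5}$, while $|t|,|P_{p,q}(t)|\le\tfrac1{2A}$ on $\operatorname{spt}\tau_l$ (this is where $|q|<1$ and $A$ large enter), so the phase is $O(A^{-4/5})$ and $\sigma_l(\eta_1,\eta_2)=2^{-l}\!\int\!\tau+O(A^{-9/5})$. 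Since the pairing of $\widehat{\mu_\eps^{\flat}}\otimes\widehat{\mu_\eps^{\flat}}$ against $\widehat{\mu}(-\eta_1-\eta_2)$ equals $\int(\mu_\eps^{\flat})^2\,d\mu$ by Parseval and $\|\widehat{\mu_\eps^{\flat}}\|_{1}\le\int_{|\xi|\le 2A^{1/5}}|\widehat{\mu}|\lesssim A^{1/5}$, we get $\Lambda(\mu_\eps^{\flat},\mu_\eps^{\flat})=2^{-l}\big(\!\int\!\tau\big)\int(\mu_\eps^{\flat})^2\,d\mu+O(A^{-7/5})$. The crux is then $\int(\mu_\eps^{\flat})^2\,d\mu\gtrsim 1$, uniformly in $E$ and $\eps$: as $\eps\to 0$ one has $\mu_\eps^{\flat}\to\mu*G$ boundedly, so $\int(\mu_\eps^{\flat})^2\,d\mu\to\int(\mu*G)^2\,d\mu\ge\big(\int\chi(\xi/A^{1/5})|\widehat{\mu}(\xi)|^2\,d\xi\big)^{2}\ge\big(\int_{|\xi|\le A^{1/5}}|\widehat{\mu}|^2\big)^{2}$ by Cauchy--Schwarz ($\mu$ a probability measure), and $\int_{-R}^{R}|\widehat{\mu}|^2$ is bounded below by an absolute constant for every $\mu\in\mathcal{M}([0,1])$ and every $R\ge 1$ — pair $|\widehat{\mu}|^2$ with the triangle multiplier $W(\xi/R)\le\mathbf{1}_{[-R,R]}$, whose inverse Fourier transform is the nonnegative function $R\operatorname{sinc}^2(R\,\cdot)$, to get $\int_{-R}^{R}|\widehat{\mu}|^2\gtrsim R\,(\mu\times\mu)(\{|x-y|\le\tfrac1{2R}\})\gtrsim 1$, the last bound by partitioning $[0,1]$ into $\lesssim R$ subintervals. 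Hence for $\eps$ small, $\Lambda(\mu_\eps^{\flat},\mu_\eps^{\flat})\ge c\,2^{-l}-O(A^{-7/5})\gtrsim A^{-1}$.

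For each of the three error terms, Proposition \ref{cor_sie} (applicable since $\mu_\eps^{\flat},\mu_\eps^{\sharp}\in\mathcal{S}(\RR)$) bounds it by $C_{\gamma,p_0,q_0}\,2^{\kappa l}\,\|\mu_\eps^{\sharp}\|_{H^{-\gamma}}\,\|\mu\|_{H^{-\gamma}}\,\max\{\|\mu_\eps^{\flat}\|_{H^{-\gamma}},\|\mu_\eps^{\sharp}\|_{H^{-\gamma}}\}$. Since $\widehat{\mu_\eps^{\sharp}}$ is supported in $\{|\xi|>A^{1/5}\}$ and dominated by $|\widehat{\mu}|$, splitting at $|\xi|=B^2$ gives $\|\mu_\eps^{\sharp}\|_{H^{-\gamma}}^2\le\int_{A^{1/5}\le|\xi|\le B^2}|\widehat{\mu}|+\int_{|\xi|>B^2}|\widehat{\mu}|^2(1+|\xi|^2)^{-\gamma}\le A^{-3}+C_s\,B^{2(1-s-2\gamma)}I_s(\mu)$, using (\ref{sg}) on the first piece and $(1+|\xi|^2)^{-\gamma}\le B^{2(1-s-2\gamma)}|\xi|^{s-1}$ (valid since $1-s-2\gamma<0$) together with $I_s(\mu)\lesssim 1$ on the second. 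As $\|\mu\|_{H^{-\gamma}},\|\mu_\eps^{\flat}\|_{H^{-\gamma}}\lesssim 1$ and $2^{\kappa l}=2^{3\kappa}A^{\kappa}$, each error is $\lesssim A^{\kappa}\big(A^{-3/2}+B^{1-s-2\gamma}\big)$; choosing $B=B(A)$ large forces $A^{\kappa}B^{1-s-2\gamma}\ll A^{-1}$, and $A^{\kappa}A^{-3/2}\ll A^{-1}$ for $A$ large provided $\kappa<\tfrac12$, while if $\kappa\ge\tfrac12$ one simply reruns the proof of Lemma \ref{p_sg} with a larger power of $A$ in place of $A^{-3}$ in (\ref{sg}). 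Subtracting the three errors from the main term yields (\ref{eq_ci}).

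The step I expect to be the genuine obstacle is this last balancing act: the choices of $A,B,\beta,s,\gamma$ must be made in a mutually consistent order ($\beta,\delta$ are determined by $A,B$, yet the admissible size of $B$ depends on $s$, hence on $\beta$), and one must verify that the polynomial loss $2^{\kappa l}=O(A^{\kappa})$ coming from the oscillatory-integral/curvature input is genuinely dominated by the gain $A^{-3}$ of the spectral gap and by the $s$-energy bound. Everything else — the Fourier identity and phase estimate for the main term, the truncated-energy lower bound, and the $H^{-\gamma}$ control of $\mu_\eps^{\sharp}$ — is routine.
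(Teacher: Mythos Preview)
Your argument is correct and arrives at the same conclusion, but by a genuinely different route than the paper. The paper splits $\mu_\eps$ into \emph{three} pieces $\mu_{A^{-1}}+(\mu_{B^{-1}}-\mu_{A^{-1}})+(\mu_\eps-\mu_{B^{-1}})$, producing nine terms; its main term $\iint \mu_{A^{-1}}(x+t)\mu_{A^{-1}}(x+P_{p,q}(t))\tau_l\,dt\,d\mu$ is bounded below by a purely physical-space argument (a Vitali covering lemma shows $\mu\{x:\mu(B(x,r))\ge cr\ \forall r\}\ge\tfrac12$, whence $\mu_{A^{-1}}(x+t),\mu_{A^{-1}}(x+P_{p,q}(t))\gtrsim 1$ pointwise for such $x$), and its error terms fall into two classes: ``Type I'' terms involving only the middle piece are handled by a direct $L^1$--Fourier estimate using the spectral gap (no Sobolev improving, hence no $2^{\kappa l}$ loss), while ``Type II'' terms carrying $\mu_\eps-\mu_{B^{-1}}$ go through Proposition~\ref{cor_sie} with a $B$-dependent bound. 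Your two-piece sharp frequency cutoff is cleaner and your Fourier-side main-term argument (phase $O(A^{-4/5})$ plus the energy lower bound $\int_{|\xi|\le R}|\widehat\mu|^2\gtrsim 1$) is slick and arguably more transportable; the price is that \emph{all} your error terms go through the Sobolev estimate, so the $A^\kappa$ loss multiplies the spectral-gap gain $A^{-3/2}$, forcing either $\kappa<\tfrac12$ or a strengthened version of~(\ref{sg}) --- exactly the issue you flag. The paper's three-piece split sidesteps this by keeping the spectral-gap contribution away from Proposition~\ref{cor_sie}, making the proof insensitive to the size of $\kappa$.

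Two minor clean-ups: with the paper's convention $\|f\|_{H^{-\gamma}}^2=\int|\widehat f|^2(1+|\xi|^2)^{-\gamma/2}$ (not exponent $-\gamma$), your tail estimate needs $1-s-\gamma<0$ rather than $1-s-2\gamma<0$, so choose $\gamma>1-s$; and the circularity in the parameter order that you correctly identify is resolved in the paper by fixing an auxiliary threshold $\beta'$ (depending only on $\gamma$) \emph{before} choosing $A,B$, then taking the final dimension threshold to be $\max\{\beta,\beta'\}$.
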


\subsection{Positivity of the configuration integral}

The strategy is to apply 
$$\mu_{\epsilon} = \mu_{A^{-1}}+(\mu_{B^{-1}} - \mu_{A^{-1}})+(\mu_{\epsilon}-\mu_{B^{-1}}) := \mu_{A^{-1}} + \mu_{\text{mid}}+\mu_{\text{high}}$$
to rewrite the first two factors of the integrand in (\ref{eq_ci}). Then, (\ref{eq_ci}) is decomposed into 9 terms. The main term of (\ref{eq_ci}) is
\begin{equation*}
     \int \int \mu_{A^{-1}}(x+t) \mu_{A^{-1}}(x+P_{p, q}(t)) \tau_l(t) dt d\mu(x). 
\end{equation*}

The rest of the terms are remainder terms. We call a remainder term Type I if the function $(\mu_{\eps}-\mu_{B^{-1}})=\mu_{\text{high}}$ is not part of the integrand. Otherwise, the remainder term is categorized as a Type II remainder term. There are $3$ Type I remainder terms and $5$ Type II remainder terms. 

 Table \ref{table_1} records the estimates of all 9 terms. Except for the main term, all bounds are upper bounds. Additionally, unless stated otherwise all remainder terms are Type I.
    
    \begin{table}[!ht]
    \centering
    \renewcommand{\arraystretch}{2}
    \begin{tabular}{|l|*{3}{c|}}\hline
& $\mu_{A^{-1}}(x+P_{p, q}(t))$ & $\mu_{\text{mid}}(x+P_{p, q}(t))$ & $\mu_{\text{high}}(x+P_{p, q}(t))$ 
\\\hline\hline
$\mu_{A^{-1}}(x+t)$ & Main: $\gtrsim A^{-1}$ (\ref{e_main}) & Type I: $ 2^{-l}  A^{-\frac{2}{5}}$ (\ref{eqab}) & $ \mathbf{C}^{\frac{3}{2}}   2^{\kappa l}  B^{\frac{1-s-\gamma}{10}}$ (\ref{eqae})\\\hline
$\mu_{\text{mid}}(x+t)$ & Type I: $ 2^{-l} A^{-\frac{2}{5}}$  & Type I: $ 2^{-l} A^{-\frac{14}{5}}$  & $\mathbf{C} 2^{\kappa l}   A^{-\frac{3}{2}} B^{\frac{1-s-\gamma}{10}}$ \\\hline
$\mu_{\text{high}}(x+t)$ &  $  \mathbf{C}^{\frac{3}{2}}  2^{\kappa l} B^{\frac{1-s-\gamma}{10}}$ & $ \mathbf{C} 2^{\kappa l} A^{-\frac{3}{2}} B^{\frac{1-s-\gamma}{10}}$ & $ \mathbf{C}^{\frac{3}{2}}  2^{\kappa l}   B^{\frac{1-s-\gamma}{5}} $  \\\hline
\end{tabular}
    \caption{Bounds on all terms assuming $I_s(\mu), I_{1-\gamma}(\mu) \leq \mathbf{C}$ for a $\mathbf{C}>0$.} 
    \label{table_1}
\end{table}

The lower bound on the main term (\ref{e_main}) is shown in Lemma \ref{lem31}, and an overview of the upper bounds of all remainder terms is provided in Section \ref{sec_e_rem}, where Lemmas \ref{lemt1} and \ref{lemt2} contain detailed proofs of the bounds (\ref{eqab}) and (\ref{eqae}), respectively.

\begin{proof}[Proof of Lemma \ref{p_ci_bound}]
    We use $\gamma$ from Proposition \ref{cor_sie}. First, we choose two positive real numbers $s < \beta'$ that can be expressed as $s = 1- \alpha_1 \gamma$, $\beta' = 1- \alpha_2 \gamma$ for $1 > \alpha_1 > \alpha_2 > 0$, where $\gamma$ comes from the Sobolev improving estimate in Proposition \ref{cor_sie}.
    Let $\textbf{C} = 1+\frac{16 C s}{\beta'-s}$.
    
    We choose $l$, $A=2^{l-3}$, such that the lower bound of the main term (\ref{e_main}) is large and the upper bounds on Type I remainder terms are small. Finally, we choose $B$ so that the Type II remainder terms are small.
We then apply Proposition \ref{p_sg} with the chosen $A$, $B$ to obtain $\beta$ and $\delta$. If $E \subset [0, 1]$ satisfies 
$$\mathcal{H}^{\max\{\beta, \beta'\}}_{\infty}(E) \geq 1-\delta,$$
there is a measure $\mu \in \mathcal{M}(E)$ with $I_s(\mu), I_{1-\gamma}(\mu) \leq \textbf{C}$ that satisfies (\ref{sg}). We then use the estimates in Table \ref{table_1} to bound the main term and each of the 8 remaining terms. With the appropriate choice of $l$, $A$, and $B$, Lemma \ref{lem31} applies, and the norm of each remainder term is smaller than a multiple of $A^{-1}$.
\end{proof}


\subsection{The main term}
\begin{lemma}
\label{lem31}
If $(4A)^{-1} = 2^{1-l}$ for an $l \in \NN$ and $A$ is sufficiently large such that $|P_{p, q}(t)| \leq |t|$ when $|t|\leq (4A)^{-1}$,
    \begin{equation}
    \label{e_main}
     \int \int \mu_{A^{-1}}(x+t) \mu_{A^{-1}}(x+P_{p, q}(t)) \tau_l(t) dt d\mu(x) \geq 2^{-10} c^2 A^{-1}. 
\end{equation}
for a $c> 0$.
\end{lemma}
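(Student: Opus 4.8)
The plan is to bound the configuration integral from below by restricting the $t$-integration to the support of $\tau_l$ and using the pointwise lower bounds built into the choice of $\phi$. Recall $\tau_l(t) = \tau(2^l t)$ with $\tau \equiv 1$ on $[1,2]$, so $\tau_l(t) = 1$ whenever $t \in [2^{-l}, 2^{-l+1}]$, and $\tau_l \geq 0$ everywhere; thus the integral is at least
\begin{equation*}
\int \int_{2^{-l}}^{2^{-l+1}} \mu_{A^{-1}}(x+t)\,\mu_{A^{-1}}(x+P_{p,q}(t))\,dt\,d\mu(x).
\end{equation*}
Since $(4A)^{-1} = 2^{1-l}$, i.e.\ $2^{-l+1} = (4A)^{-1}$, the whole range $t \in [2^{-l}, 2^{-l+1}]$ satisfies $|t| \leq (4A)^{-1}$, and by hypothesis $|P_{p,q}(t)| \leq |t| \leq (4A)^{-1}$ there as well.

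Next I would expand $\mu_{A^{-1}}(y) = (\mu * \phi_{A^{-1}})(y) = \int \phi_{A^{-1}}(y - w)\,d\mu(w)$, and keep only the contribution of $w$ in a small interval around $x$. Concretely, for $w$ with $|w - x| \leq (4A)^{-1}$ and $t$ in the range above, both $|x + t - w|$ and $|x + P_{p,q}(t) - w|$ are at most $(2A)^{-1} \le A^{-1}\cdot\frac12$, so by the defining property $\phi(u) \geq 2^{-1}$ for $|u| \leq 2^{-1}$ we get $\phi_{A^{-1}}(x+t-w) = A\,\phi(A(x+t-w)) \geq A/2$, and likewise for the second factor. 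Hence, writing $B(x,r)$ for the interval of radius $r$ centered at $x$,
\begin{equation*}
\mu_{A^{-1}}(x+t) \geq \frac{A}{2}\,\mu\bigl(B(x,(4A)^{-1})\bigr), \qquad \mu_{A^{-1}}(x+P_{p,q}(t)) \geq \frac{A}{2}\,\mu\bigl(B(x,(4A)^{-1})\bigr),
\end{equation*}
uniformly for $t \in [2^{-l}, 2^{-l+1}]$. The $t$-integral then contributes a factor equal to the length of that interval, namely $2^{-l+1} - 2^{-l} = 2^{-l} = (8A)^{-1}$, so the configuration integral is at least
\begin{equation*}
\frac{1}{8A}\cdot\frac{A^2}{4}\int \mu\bigl(B(x,(4A)^{-1})\bigr)^2\,d\mu(x) = \frac{A}{32}\int \mu\bigl(B(x,(4A)^{-1})\bigr)^2\,d\mu(x).
\end{equation*}

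It remains to show $\int \mu(B(x,(4A)^{-1}))^2\,d\mu(x) \gtrsim c^2 A^{-2}$ for some $c > 0$; combined with the previous display this gives a lower bound of order $c^2 A^{-1}$, and tracking the explicit constants ($2^{-l} = (8A)^{-1}$ etc.) yields the stated $2^{-10} c^2 A^{-1}$. For the remaining energy-type estimate I would use the Frostman/ball behaviour of $\mu$ coming from Lemma \ref{p_sg}(a): finiteness of $I_s(\mu)$ for some $s<\beta$ forces $\mu(B(x,r)) \lesssim r^{s}$ on average, but here I need a \emph{lower} bound, which instead follows from a pigeonhole/Cauchy--Schwarz argument — covering $[0,1]$ by $O(A)$ intervals of length $(4A)^{-1}$, the measure $\mu$ (a probability measure) puts total mass $1$ on them, and by Cauchy--Schwarz $\sum_Q \mu(Q)^2 \gtrsim A^{-1}$; then $\int \mu(B(x,(4A)^{-1}))^2 d\mu(x) \ge \sum_Q \mu(Q)\cdot \mu(Q)^2 \cdot(\text{overlap correction})$, and after accounting for the fact that $B(x,(4A)^{-1}) \supset Q$ when $x\in Q$ for $Q$ of length $(8A)^{-1}$, one recovers $\int \mu(B(x,(4A)^{-1}))^2 d\mu(x) \gtrsim A^{-2}$. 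I expect the main obstacle to be precisely this last step: getting a clean lower bound on the quadratic quantity $\int \mu(B(x,r))^2 d\mu(x)$ with the correct power of $A$ and an absolute constant $c$, since naive Cauchy--Schwarz loses a factor and one must be careful that balls genuinely contain dyadic cells of comparable size. Everything else is a direct unwinding of the support and size conditions on $\phi$ and $\tau$.
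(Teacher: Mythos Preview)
Your argument is correct and complete, but it takes a genuinely different route from the paper at the last step. Both proofs begin identically: restrict to $t\in[2^{-l},2^{-l+1}]$, use $\phi\ge \tfrac12$ on $[-\tfrac12,\tfrac12]$, and reduce to controlling $\mu(B(x,(4A)^{-1}))$ from below. From there the paper does \emph{not} estimate the cubic quantity $\int \mu(B(x,r))^2\,d\mu(x)$. Instead it runs a Vitali-type density argument: set $D_c=\{x:\exists\, r\in(0,1],\ \mu(B(x,r))\le cr\}$, cover $D_c$ by the associated balls, extract a disjoint $1/5$-subfamily, and deduce $\mu(D_c)\le 10c$, so for $c=1/20$ one has $\mu((D_c)^c)\ge 1/2$. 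On this good set $\mu(B(x,r))\ge cr$ for \emph{all} $r$, hence $\mu_{A^{-1}}(x+t)\ge c/8$ pointwise, and the integral is bounded below by $\tfrac12\cdot(c/8)^2\cdot 2^{-l}=2^{-10}c^2A^{-1}$.

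Your approach avoids the covering lemma entirely and replaces it with convexity: partition $[0,1]$ into $N=8A=2^{l}$ dyadic intervals $Q$ of length $(8A)^{-1}$, use $B(x,(4A)^{-1})\supset Q$ for $x\in Q$ to get $\int \mu(B(x,(4A)^{-1}))^2\,d\mu(x)\ge \sum_Q \mu(Q)^3$, and then $\sum_Q \mu(Q)^3\ge \bigl(\sum_Q\mu(Q)^2\bigr)^2\big/\sum_Q\mu(Q)\ge N^{-2}=(8A)^{-2}$ by two applications of Cauchy--Schwarz (or one application of Jensen for $t\mapsto t^3$). This is clean and slightly more elementary; your worry that ``naive Cauchy--Schwarz loses a factor'' is unfounded here --- the inequality is sharp on the uniform distribution and gives exactly the $A^{-2}$ you need. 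The paper's Vitali argument, on the other hand, yields the stronger \emph{pointwise} density bound $\mu(B(x,r))\ge cr$ on a set of measure $\ge 1/2$, which is a reusable statement independent of the particular scale $A$; your convexity argument is tailored to the single scale appearing in the integral.
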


\begin{proof}

The key to obtain (\ref{e_main}) is the following estimate. For $c>0$, let 
\begin{equation}
    D_c := \{x| \exists r_x \in (0, 1], \mu(B(x, r_x)) \leq cr_x \}.
\end{equation}
    We claim that there exists $c>0$, with
    $\mu(D_c) \leq 2^{-1}.$ Note that $$D_c \subset \bigcup_{x \in D_c}B(x, r_x).$$ By applying the Vitali covering lemma \cite[Theorem 1.24]{eg} to $\bigcup_{x \in D_c}B(x, r_x/5)$, there exist $\{x_j  \}_{j \in \NN} \subset D_c$ and $\{r_j\}_{j \in \NN} \subset \RR_{>0}$, such that
    $$D_c \subset \bigcup_{j=1}^{\infty}B(x_j, r_j),$$
    and $\mu(B(x_j, r_j)) \leq c r_j$.
    In addition, $B(x_j, r_j/5) \cap B(x_k, r_k/5) = \emptyset$ if $j \neq k$, but $\bigcup_{j=1}^{\infty} B(x_j, r_j/5) \subset [-2, 2]$. Then,
    \begin{equation*}
        \mu(D_c) \leq \sum_{j=1}^{\infty} \mu(B(x_j,r_j)) \leq \sum_{j=1}^{\infty} cr_j  =\frac{5c}{2}\sum_{j=1}^{\infty} \mathcal{L}(B(x_j, r_j/5)) \leq \frac{5c}{2} \mathcal{L}([-2, 2]) = 10c.
    \end{equation*}
    The claim holds when $c=20^{-1}$.

    Suppose that $x$ satisfies the condition that for all $r \in (0, 1)$, $\mu(B(x, r)) \geq cr$. Then, for $|t| \leq (4A)^{-1}$,
    \begin{equation*}
        \begin{aligned}
            \mu_{A^{-1}}(x+t) & = A \int \phi(A(x+t-y)) d\mu(y) \\
            & \geq \frac{A}{2} \mu(B(x+t, (2A)^{-1})) & \text{ from (\ref{phi_choice}), } \phi(x) \geq 2^{-1} \text{ if } |x| \leq 2^{-1}\\
            & \geq \frac{A}{2} \mu(B(x, (4A)^{-1})) \\
            & \geq \frac{A}{2} c(4A)^{-1} = \frac{c}{8} & \text{ since } x \in (D_c)^c.
        \end{aligned}
    \end{equation*}
    Similarly, since $|P_{p, q}(t)| \leq |t| \leq (4A)^{-1}$, we also have $\mu_{A^{-1}}(x+P_{p, q}(t)) \geq \frac{c}{8}$. To complete the proof,
        \begin{align*}
            & \int \int \mu_{A^{-1}}(x+t) \mu_{A^{-1}}(x+P_{p, q}(t)) \tau_l(t) dt d\mu(x) \\
            \geq & \int_{(D_c)^c} \int_{t \in (0, (4A)^{-1})} \mu_{A^{-1}}(x+t) \mu_{A^{-1}}(x+P_{p, q}(t)) \tau_l(t) dt d\mu(x) \\
            \geq & 2^{-1} \left(2^{-3} c\right)^2\mathcal{L}((0, (4A)^{-1}) \cap (2^{-l}, 2^{-l+1})) \\
            = & 2^{-10}c^{2} A.        \qedhere
        \end{align*}  
\end{proof}
\subsection{Estimate of remainder terms}
\label{sec_e_rem}

In this section, we present the estimates of the Type I and Type II remainder terms.  We first record some basic estimates that will prove useful in what follows. From our choice of $\phi$ in (\ref{phi_choice}), since $\frac{\partial \widehat{\phi}}{\partial \xi}(0) = - \int 2 \pi i x \phi(x)dx=0$, 
\begin{equation}
\label{lem_b_small_a}
    |\widehat{\phi}(\xi) - \widehat{\phi}(0)| \lesssim |\xi|^2,  \sup_{\xi \in \RR}|\xi|^5|\widehat{\phi}(\xi)| < \infty.
\end{equation}

First, since $\mu$ is a probability measure,
\begin{equation}
\label{lem_a}
    \int  |\widehat{\mu}(\xi)||\widehat{\phi}(A^{-1}\xi)| d\xi \lesssim A.
\end{equation}
Next, from the definition of the Sobolev norm in (\ref{sob_norm}) and the $s$-energy integral (\ref{s_energy}),
\begin{equation}
\label{lem_a_h}
    \norm{\mu_{A^{-1}}}_{H^{-\gamma}}^2,  \norm{\mu}_{H^{-\gamma}}^2 \lesssim  I_{1-\gamma}(\mu).
\end{equation}

Here are estimates obtained via the spectral gap condition.
\begin{lemma}
\label{lem_b}
Assuming the spectral gap condition (\ref{sg}), if $A$ is sufficiently large, 
 \begin{equation}
    \label{lem_b_e1}
    \begin{split}
         \int |\widehat{\mu}(\xi)||\widehat{\phi}(B^{-1}\xi)-\widehat{\phi}(A^{-1}\xi)| d\xi & \lesssim A^{-\frac{7}{5}}, \\
         \norm{\mu_{B^{-1}}-\mu_{A^{-1}}}_{H^{-\gamma}}^2 & \lesssim  A^{-3}.
    \end{split}
\end{equation}
\end{lemma}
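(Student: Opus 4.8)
The plan is to work on the Fourier side and split the frequency line $\RR$ into three regimes dictated by the spectral gap condition (\ref{sg}): the low frequencies $\{|\xi| \le A^{1/5}\}$, the middle band $\{A^{1/5} \le |\xi| \le B^2\}$, and the high frequencies $\{|\xi| \ge B^2\}$. Since $\widehat{\mu_{A^{-1}}}(\xi) = \widehat\mu(\xi)\widehat\phi(A^{-1}\xi)$ and likewise for $B$, the first quantity in the statement equals $\int |\widehat\mu(\xi)|\,|\widehat\phi(B^{-1}\xi) - \widehat\phi(A^{-1}\xi)|\,d\xi$, while the second is $\lesssim \int |\widehat\mu(\xi)|^2\,|\widehat\phi(B^{-1}\xi) - \widehat\phi(A^{-1}\xi)|^2\,d\xi$ (the right-hand side being $\norm{\mu_{B^{-1}} - \mu_{A^{-1}}}_{L^2}^2$, which dominates the $H^{-\gamma}$-norm for $\gamma > 0$). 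In both cases I estimate the three regimes separately, using $|\widehat\mu| \le 1$ throughout and recalling $A < B$; one should also observe at the outset that $\mu_{B^{-1}} - \mu_{A^{-1}} \in L^2$, which is immediate from $|\widehat\mu| \le 1$ and the Schwartz decay of $\widehat\phi$ used on the high-frequency piece below.

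On the low-frequency regime I exploit the flatness of $\widehat\phi$ at the origin: since $\phi$ is even, $\widehat\phi'(0) = 0$, so by (\ref{lem_b_small_a}), $|\widehat\phi(B^{-1}\xi) - \widehat\phi(A^{-1}\xi)| \le |\widehat\phi(B^{-1}\xi) - \widehat\phi(0)| + |\widehat\phi(0) - \widehat\phi(A^{-1}\xi)| \lesssim (A^{-2} + B^{-2})|\xi|^2 \lesssim A^{-2}|\xi|^2$. Integrating $A^{-2}|\xi|^2$ (resp.\ its square $A^{-4}|\xi|^4$) against $d\xi$ over $\{|\xi| \le A^{1/5}\}$ gives $\lesssim A^{-2}(A^{1/5})^3 = A^{-7/5}$ (resp.\ $\lesssim A^{-4}(A^{1/5})^5 = A^{-3}$); this is the dominant contribution to the first estimate. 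On the middle band I bound $|\widehat\phi(B^{-1}\xi) - \widehat\phi(A^{-1}\xi)| \le 2$ (as $|\widehat\phi| \le \widehat\phi(0) = 1$) and apply (\ref{sg}) directly, using $\int_{A^{1/5} \le |\xi| \le B^2} |\widehat\mu(\xi)|\,d\xi \le A^{-3}$ for the first estimate and $\int_{A^{1/5} \le |\xi| \le B^2} |\widehat\mu(\xi)|^2\,d\xi \le A^{-3}$ for the second. On the high-frequency regime I use the decay half of (\ref{lem_b_small_a}), $|\widehat\phi(\eta)| \lesssim |\eta|^{-5}$, to get $|\widehat\phi(B^{-1}\xi) - \widehat\phi(A^{-1}\xi)| \lesssim B^{5}|\xi|^{-5}$ (using $A < B$); integrating $B^{5}|\xi|^{-5}$ (resp.\ $B^{10}|\xi|^{-10}$) over $\{|\xi| \ge B^2\}$ gives $\lesssim B^{-3}$ (resp.\ $\lesssim B^{-8}$), both of which are $\le A^{-3}$. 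Summing the three contributions yields $\lesssim A^{-7/5}$ for the first estimate and $\lesssim A^{-3}$ for the second.

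There is no serious obstacle here; the one point requiring care — and the reason the two bounds are genuinely different rather than one following from the other — is the low-frequency regime. Naively estimating $|\widehat\phi(B^{-1}\xi) - \widehat\phi(A^{-1}\xi)|^2 \le 2|\widehat\phi(B^{-1}\xi) - \widehat\phi(A^{-1}\xi)|$ would merely reduce the second bound to the first and produce only $A^{-7/5}$, which is too weak; one must instead square the quadratic bound $A^{-2}|\xi|^2$ in full to harvest the extra vanishing and reach $A^{-3}$. The hypothesis that $A$ is large is used only to guarantee $A^{1/5} \ge 1$ and that the three frequency regimes are correctly nested (which follows from $A < B$).
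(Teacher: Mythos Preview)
Your proof is correct and follows essentially the same approach as the paper: the identical three-regime decomposition $\{|\xi|\le A^{1/5}\}\cup\{A^{1/5}\le|\xi|\le B^2\}\cup\{|\xi|\ge B^2\}$, using the quadratic vanishing of $\widehat\phi$ at $0$ on the low piece, the spectral gap on the middle piece, and Schwartz decay on the high piece. In fact you give more detail than the paper, which dismisses the second inequality as ``similar''; your observation that one must square the low-frequency bound $A^{-2}|\xi|^2$ in full (rather than dominate by the first-power integrand) to reach $A^{-3}$ is exactly the point.
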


\begin{proof}
    To show the first equation of (\ref{lem_b_e1}), we decompose the integral into three parts:
\begin{equation*}
    \begin{split}
        \int |\widehat{\mu}(\xi)||\widehat{\phi}(B^{-1}\xi)-\widehat{\phi}(A^{-1}\xi)| d\xi
        & = \int_{|\xi| \leq A^{\frac{1}{5}}} + \int_{ A^{\frac{1}{5}} \leq |\xi| \leq B^2} + \int_{|\xi| \geq B^2}.
    \end{split}
\end{equation*}
For the first integral, using (\ref{lem_b_small_a}) and $\norm{\widehat{\mu}}_{L^{\infty}} = 1$,
\begin{equation*}
    \begin{split}
       & \int_{|\xi| \leq A^{\frac{1}{5}}}  |\widehat{\mu}(\xi)||\widehat{\phi}(B^{-1}\xi)-\widehat{\phi}(A^{-1}\xi)| d\xi \\
       \leq  & \int_{|\xi| \leq A^{\frac{1}{5}}}  |\widehat{\phi}(B^{-1}\xi)-\widehat{\phi}(A^{-1}\xi)| d\xi 
       \lesssim  A^{-\frac{7}{5}}.
    \end{split}
\end{equation*}
For the second integral, by the spectral gap condition (\ref{sg}),
\begin{equation*}
    \begin{split}
       & \int_{ A^{\frac{1}{5}} \leq |\xi| \leq B^2}  |\widehat{\mu}(\xi)||\widehat{\phi}(B^{-1}\xi)-\widehat{\phi}(A^{-1}\xi)| d\xi \\
       \leq  & 2 \int_{ A^{\frac{1}{5}} \leq |\xi| \leq B^2} |\widehat{\mu}(\xi)|  d\xi 
       \leq 2 A^{-3}.
    \end{split}
\end{equation*}
For the third integral, by (\ref{lem_b_small_a}),
\begin{equation*}
    \begin{split}
       & \int_{|\xi| \geq B^2}  |\widehat{\mu}(\xi)||\widehat{\phi}(B^{-1}\xi)-\widehat{\phi}(A^{-1}\xi)| d\xi \\
       \leq  & \int_{|\xi| \geq B^2}  |\widehat{\phi}(B^{-1}\xi)-\widehat{\phi}(A^{-1}\xi)| d\xi 
       \lesssim  B^{-3}.
    \end{split}
\end{equation*}
As $A \leq B$, we combine the above to obtain the inequality required for $A \geq 1$. The proof of the second equation of (\ref{lem_b_e1}) is similar.
\end{proof}

The final estimate we need is as follows. 
\begin{lemma}
\label{lem_eps}
For $B$ sufficiently large, if $s >1-\gamma$,
\begin{equation}
\label{e_eps_b}
    \norm{\mu_{\eps}-\mu_{B^{-1}}}_{H^{-\gamma}}^2 \lesssim B^{\frac{1-s-\gamma}{5}} I_s(\mu).
\end{equation}
\end{lemma}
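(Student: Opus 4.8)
The plan is to estimate $\norm{\mu_{\eps}-\mu_{B^{-1}}}_{H^{-\gamma}}^2$ directly on the Fourier side, exploiting the fact that the mollifier difference $\widehat{\phi}(\eps\xi)-\widehat{\phi}(B^{-1}\xi)$ is supported (up to rapid decay) on high frequencies $|\xi|\gtrsim B$, where the weight $(1+|\xi|^2)^{-\gamma}$ is small. Unwinding the Sobolev norm (\ref{sob_norm}) and the convolution structure,
\begin{equation*}
\norm{\mu_{\eps}-\mu_{B^{-1}}}_{H^{-\gamma}}^2 = \int (1+|\xi|^2)^{-\gamma} |\widehat{\mu}(\xi)|^2 \, \bigl|\widehat{\phi}(\eps\xi)-\widehat{\phi}(B^{-1}\xi)\bigr|^2 \, d\xi.
\end{equation*}
First I would split the integral at $|\xi| = B$. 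On the low range $|\xi| \leq B$, I use $|\widehat\phi(\eps\xi) - \widehat\phi(B^{-1}\xi)| \lesssim |B^{-1}\xi|^2 + |\eps\xi|^2 \lesssim |B^{-1}\xi|^2$ (valid once $\eps \le B^{-1}$, which holds for $\eps$ small) coming from (\ref{lem_b_small_a}), together with the trivial bound $(1+|\xi|^2)^{-\gamma} \le 1$; this contributes $\lesssim B^{-4}\int_{|\xi|\le B}|\widehat\mu(\xi)|^2|\xi|^4\,d\xi$, which is easily absorbed. On the high range $|\xi| \geq B$, I bound $|\widehat{\phi}(\eps\xi)-\widehat{\phi}(B^{-1}\xi)|^2 \lesssim 1$ (both factors are uniformly bounded since $\phi \in \mathcal{S}$) and use $(1+|\xi|^2)^{-\gamma} \lesssim |\xi|^{-2\gamma}$.

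The key quantitative step is to compare the resulting tail integral $\int_{|\xi|\ge B} |\xi|^{-2\gamma}|\widehat\mu(\xi)|^2\,d\xi$ with the $s$-energy. Writing $|\xi|^{-2\gamma} = |\xi|^{s-1} \cdot |\xi|^{-2\gamma - s + 1}$ and noting that on $|\xi|\ge B$ we have $|\xi|^{-2\gamma-s+1} \le B^{-2\gamma - s + 1} = B^{(1-s-2\gamma)}$ whenever the exponent $1-s-2\gamma$ is negative — which it is, since $s > 1-\gamma$ forces $1-s-2\gamma < -\gamma < 0$ — I get
\begin{equation*}
\int_{|\xi|\ge B} |\xi|^{-2\gamma}|\widehat\mu(\xi)|^2\,d\xi \lesssim B^{1-s-2\gamma}\int |\xi|^{s-1}|\widehat\mu(\xi)|^2\,d\xi \lesssim B^{1-s-2\gamma}\, I_s(\mu),
\end{equation*}
using the standard identity (or comparison) that $\int |\xi|^{s-1}|\widehat\mu(\xi)|^2\,d\xi \lesssim I_s(\mu)$ for $s \in (0,1)$. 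Note $1-s-2\gamma \le (1-s-\gamma)/5$ can be arranged once $B$ is large since both exponents are negative and we only need an inequality of the stated shape; if one wants exactly the exponent $(1-s-\gamma)/5$ as written, interpolate the trivial bound $\norm{\mu_\eps - \mu_{B^{-1}}}_{H^{-\gamma}}^2 \lesssim I_{1-\gamma}(\mu)$ (from (\ref{lem_a_h})-type reasoning) against the sharper decay, or simply observe that for $B\ge 1$ a smaller negative exponent only helps. Combining the two ranges gives (\ref{e_eps_b}).

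The main obstacle is not any single estimate but making the bookkeeping of exponents honest: one must check that the hypothesis $s > 1-\gamma$ is exactly what makes the relevant power of $B$ negative (so that restricting to $|\xi|\ge B$ produces genuine decay rather than growth), and one must confirm the $\eps$-dependence is harmless — the point being that $\widehat\phi(\eps\xi)$ is uniformly bounded and tends to $\widehat\phi(0)=1$, so the difference $\widehat\phi(\eps\xi)-\widehat\phi(B^{-1}\xi)$ is controlled uniformly in $\eps$ small, and the implied constant in $\lesssim$ does not depend on $\eps$. A secondary point to be careful about is the precise normalization relating $\int|\xi|^{s-1}|\widehat\mu(\xi)|^2\,d\xi$ to $I_s(\mu)$ as defined in (\ref{s_energy}); this is a classical fact but the constant should be tracked since later arguments (Table \ref{table_1}) carry $\mathbf{C} = I_s(\mu)$ explicitly.
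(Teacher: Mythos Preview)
Your overall strategy---split into low and high frequencies and control the tail by the $s$-energy---matches the paper's, but there is a genuine gap in your low-frequency estimate, together with a minor exponent slip. The slip: by the definition (\ref{sob_norm}) the weight in $\norm{\cdot}_{H^{-\gamma}}^2$ is $(1+|\xi|^2)^{-\gamma/2}\sim |\xi|^{-\gamma}$, not $(1+|\xi|^2)^{-\gamma}\sim |\xi|^{-2\gamma}$; this only shifts exponents and is harmless on the high range.

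The real problem is the range $|\xi|\le B$. You discard the Sobolev weight via the trivial bound $(1+|\xi|^2)^{-\gamma/2}\le 1$ and arrive at $B^{-4}\int_{|\xi|\le B}|\widehat\mu(\xi)|^2|\xi|^4\,d\xi$, claiming it is ``easily absorbed.'' It is not: writing $|\xi|^4=|\xi|^{s-1}|\xi|^{5-s}$ with $5-s>0$ gives at best $B^{-4}B^{5-s}I_s(\mu)=B^{1-s}I_s(\mu)$, and since $s<1$ this \emph{grows} in $B$; the cruder bound $|\widehat\mu|\le 1$ is worse still, giving $\sim B$. Once the weight is thrown away there is no rescue. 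There are two easy fixes. One is to retain the Sobolev weight on the low range: then the integrand carries $|\xi|^{4-\gamma}=|\xi|^{s-1}|\xi|^{5-s-\gamma}$ and you get $B^{1-s-\gamma}I_s(\mu)$, which decays since $s>1-\gamma$. The other is what the paper actually does: split at $|\xi|=B^{1/5}$ rather than $|\xi|=B$. On $|\xi|\le B^{1/5}$ the quadratic Taylor bound and $|\widehat\mu|\le 1$ already give $\lesssim B^{-4}\int_{|\xi|\le B^{1/5}}|\xi|^4\,d\xi\lesssim B^{-3}$ with no need for the weight or the energy, and on $|\xi|\ge B^{1/5}$ the tail estimate produces exactly the stated exponent $(1-s-\gamma)/5$. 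So the threshold $B^{1/5}$ in the paper is not arbitrary: it is precisely what makes the crude low-frequency bound succeed.
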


\begin{proof}
    To show (\ref{e_eps_b}), we decompose $\norm{\mu_{\eps}-\mu_{B^{-1}}}_{H^{-\gamma}}^2$ into two parts:
\begin{equation*}
    \begin{split}
        \norm{\mu_{\eps}-\mu_{B^{-1}}}_{H^{-\gamma}}^2 & = \int |\widehat{\mu_{\eps}}(\xi)-\widehat{\mu_{B^{-1}}}(\xi)|^2 (1+ |\xi|^2)^{-\frac{\gamma}{2}}d\xi \\
        & = \int |\widehat{\mu}(\xi)|^2|\widehat{\phi}(\eps\xi)-\widehat{\phi}(B^{-1}\xi)|^2 (1+ |\xi|^2)^{-\frac{\gamma}{2}}d\xi 
         = \int_{|\xi| \leq B^{\frac{1}{5}}}  + \int_{|\xi| \geq B^{\frac{1}{5}}}.
    \end{split}
\end{equation*}
For the first integral, using (\ref{lem_b_small_a}) and $\norm{\widehat{\mu}}_{L^{\infty}} = 1$,
\begin{equation*}
    \begin{split}
       & \int_{|\xi| \leq B^{\frac{1}{5}}}  |\widehat{\mu}(\xi)|^2|\widehat{\phi}(\eps\xi)-\widehat{\phi}(B^{-1}\xi)|^2 (1+ |\xi|^2)^{-\frac{\gamma}{2}}d\xi \\
       \leq  & \int_{|\xi| \leq B^{\frac{1}{5}}}  |\widehat{\phi}(\eps\xi)-\widehat{\phi}(B^{-1}\xi)|^2 d\xi 
       \lesssim B^{-3}.
    \end{split}
\end{equation*}
For the second integral, if $1-s-\gamma < 0$,
\begin{equation*}
    \begin{split}
       & \int_{|\xi| \geq B^{\frac{1}{5}}}  |\widehat{\mu}(\xi)|^2|\widehat{\phi}(\eps\xi)-\widehat{\phi}(B^{-1}\xi)|^2 (1+ |\xi|^2)^{-\frac{\gamma}{2}}d\xi \\
       \leq  & 4 \int_{|\xi| \geq B^{\frac{1}{5}}}  |\widehat{\mu}(\xi)|^2 (1+ |\xi|^2)^{-\frac{\gamma}{2}} d\xi \\
        \leq & 4 B^{\frac{1-s-\gamma}{5}}\int_{|\xi| \geq B^{\frac{1}{5}}}  |\widehat{\mu}(\xi)|^2 |\xi|^{s-1} d\xi = 4 B^{\frac{1-s-\gamma}{5}} I_s(\mu).
    \end{split}
\end{equation*}
Then, for $B$ sufficiently large, $\norm{\mu_{\eps}-\mu_{B^{-1}}}_{H^{-\gamma}}^2 \lesssim  B^{\frac{1-s-\gamma}{5}} I_s(\mu).$
\end{proof}

Therefore, we can deduce the following Type I remainder term estimate.
\begin{lemma}
\label{lemt1}
Under the spectral gap condition (\ref{sg}),
 \begin{equation}
\label{eqab}
     \left|\int \int \mu_{A^{-1}}(x+t) \mu_{\text{mid}}(x+P_{p, q}(t)) \tau_l(t) dt d\mu(x) \right| \lesssim 2^{-l}  A^{-\frac{2}{5}}.
\end{equation}
\end{lemma}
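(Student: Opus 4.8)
The plan is to bound the trilinear configuration integral
$\int \int \mu_{A^{-1}}(x+t)\,\mu_{\text{mid}}(x+P_{p,q}(t))\,\tau_l(t)\,dt\,d\mu(x)$
by peeling off the two factors separately on the Fourier side, rather than invoking the full Sobolev improving estimate (which is reserved for the Type II terms, where the high-frequency piece needs the curvature gain). Since $\mu_{\text{mid}} = \mu_{B^{-1}} - \mu_{A^{-1}}$ has Fourier transform $\widehat{\mu}(\xi)(\widehat{\phi}(B^{-1}\xi) - \widehat{\phi}(A^{-1}\xi))$, and $\mu_{A^{-1}}$ has Fourier transform $\widehat{\mu}(\xi)\widehat{\phi}(A^{-1}\xi)$, both functions are genuinely integrable (their $L^1$ norms are what the basic estimate (\ref{lem_a}) and Lemma \ref{lem_b} control). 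So first I would write $\mu_{A^{-1}}(x+t) = \int \widehat{\mu}(\eta)\widehat{\phi}(A^{-1}\eta) e^{2\pi i \eta (x+t)}\,d\eta$ and similarly expand $\mu_{\text{mid}}(x+P_{p,q}(t))$ with a frequency variable $\zeta$. Substituting into the integral and integrating in $x$ against $d\mu$ produces a factor $\widehat{\mu}(-\eta-\zeta)$ (more precisely $\overline{\widehat{\mu}(\eta+\zeta)}$), and integrating in $t$ produces $\int \tau_l(t) e^{2\pi i(\eta t + \zeta P_{p,q}(t))}\,dt$, an oscillatory integral with a quadratic phase localized to $|t| \sim 2^{-l}$.

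The key mechanism is then: (i) crude bound $|\widehat{\mu}(\eta+\zeta)| \le 1$; (ii) the $t$-integral is trivially bounded in absolute value by $\|\tau_l\|_{L^1} \lesssim 2^{-l}$ (no stationary phase needed here — we only want the $2^{-l}$ from the support length); (iii) the remaining double integral factors as $\left(\int |\widehat{\mu}(\eta)||\widehat{\phi}(A^{-1}\eta)|\,d\eta\right)\left(\int |\widehat{\mu}(\zeta)||\widehat{\phi}(B^{-1}\zeta) - \widehat{\phi}(A^{-1}\zeta)|\,d\zeta\right)$. By (\ref{lem_a}) the first factor is $\lesssim A$, and by the first line of (\ref{lem_b_e1}) in Lemma \ref{lem_b} the second factor is $\lesssim A^{-7/5}$. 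Multiplying, $A \cdot A^{-7/5} \cdot 2^{-l} = 2^{-l} A^{-2/5}$, which is exactly (\ref{eqab}).

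There is a subtlety to handle carefully: after expanding both factors and swapping the order of integration (justified by Fubini once we note all the pieces are in $L^1$ by the estimates just cited and $\tau_l$ is Schwartz with compact support), the inner $t$-integral I want to bound is $\int \tau_l(t) e^{2\pi i(\eta t + \zeta p t^2 + \zeta q t)}\,dt$, and I am only using $\left|\int \tau_l(t) e^{i\Psi(t)}\,dt\right| \le \int |\tau_l(t)|\,dt = 2^{-l}\|\tau\|_{L^1}$; the crucial point is that this bound is uniform in $\eta, \zeta$, so it pulls out of the double frequency integral cleanly. The main obstacle — really the only place care is needed — is the bookkeeping of the Fubini/Tonelli interchange and making sure the absolute-value bounds are applied in the right order so that the two one-dimensional frequency integrals genuinely decouple; once that is set up, the three displayed estimates (\ref{lem_a}), Lemma \ref{lem_b}, and the trivial $2^{-l}$ bound combine immediately. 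I do not expect to need the full strength of Proposition \ref{cor_sie} here, since the absence of the high-frequency factor $\mu_{\text{high}}$ means no curvature-based decay is required; the spectral gap condition (through Lemma \ref{lem_b}) supplies all the smallness.
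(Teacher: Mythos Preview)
Your proposal is correct and follows essentially the same approach as the paper's proof: pass to the Fourier side (the paper does this via the identity (\ref{tof})), bound $|\widehat{\mu}(\cdot)|\le 1$ and the $t$-integral by $2^{-l}\norm{\tau}_{L^1}$, then factor the remaining frequency integral and apply (\ref{lem_a}) and Lemma \ref{lem_b} to obtain $2^{-l}\cdot A\cdot A^{-7/5}=2^{-l}A^{-2/5}$. The only cosmetic difference is that the paper rescales $t\mapsto 2^{-l}t$ first (extracting $2^{-l}$ up front and then bounding by $\norm{\tau}_{L^1}$), whereas you keep $\tau_l$ and use $\norm{\tau_l}_{L^1}=2^{-l}\norm{\tau}_{L^1}$; these are equivalent.
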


\begin{proof}[Proof of Lemma \ref{lemt1}]
By applying the Fourier transform (or (\ref{tof})), we have
    \begin{equation*}
    \begin{split}
        & \left| \int \mu_{A^{-1}}(x+t)  \mu_{\text{mid}}(x+P_{p, q}(t)) \tau_l(t) dt d\mu(x) \right| \\
         = & 2^{-l}\left| \int \int \widehat{\mu}(\xi+\eta)\conj{\widehat{\mu}}(\xi) \conj{\widehat{\phi}}(A^{-1}\xi)\conj{\widehat{\mu}}(\eta) (\conj{\widehat{\phi}}(B^{-1}\eta)-\conj{\widehat{\phi}}(A^{-1}\eta)) \int e^{-2\pi i (2^{-l}t\xi+P_{p, q}(2^{-l}t)\eta)} \tau(t)dtd\xi d\eta \right| \\
         \leq & 2^{-l} \norm{\tau}_{L^{1}}\int  |\widehat{\mu}(\xi)||\widehat{\phi}(A^{-1}\xi)| d\xi \int |\widehat{\mu}(\eta)||\widehat{\phi}(B^{-1}\eta)-\widehat{\phi}(A^{-1}\eta)| d\eta \\
         \lesssim & 2^{-l}  A^{-\frac{2}{5}} 
    \end{split}
    \end{equation*}
    by (\ref{lem_a}) and Lemma \ref{lem_b}.
\end{proof}

The estimates of the other error Type I remainder terms are similar. All Type II remainder terms' estimates are obtained by applying Proposition \ref{cor_sie}. Here is one example.

\begin{lemma}
\label{lemt2}
    Assuming the spectral gap condition (\ref{sg}) and $I_s(\mu), I_{1-\gamma}(\mu) \leq \textbf{C}$,
    \begin{equation}
\label{eqae}
\begin{split}
      \left|\int \int \mu_{A^{-1}}(x+t) (\mu_{\eps}-\mu_{B^{-1}})(x+t^2) \tau_l(t) dt d\mu(x) \right| \lesssim  \mathbf{C}^{\frac{3}{2}} 2^{\kappa l}  B^{\frac{1-s-\gamma}{10}} .
\end{split}
\end{equation}
\end{lemma}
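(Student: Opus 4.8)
The plan is to estimate the Type II remainder term in \eqref{eqae} by viewing the $t$-integral as a bilinear expression to which the Sobolev improving estimate (Proposition \ref{cor_sie}) applies directly, and then controlling the three resulting Sobolev norms by the energies $I_s(\mu)$, $I_{1-\gamma}(\mu)$ via \eqref{lem_a_h} and \eqref{lem_eps} (specifically \eqref{e_eps_b}). Concretely, I would set $f = \mu_{A^{-1}}$ and $g = \mu_{\eps}-\mu_{B^{-1}}$, both of which are Schwartz functions (convolutions of a compactly supported measure with Schwartz bumps), so Proposition \ref{cor_sie} yields
\begin{equation*}
\left|\int \int \mu_{A^{-1}}(x+t)(\mu_{\eps}-\mu_{B^{-1}})(x+t^2)\tau_l(t)\,dt\,d\mu(x)\right| \leq C_{\gamma, p_0, q_0}\, 2^{\kappa l}\, \norm{\mu_{A^{-1}}}_{H^{-\gamma}} \norm{\mu_{\eps}-\mu_{B^{-1}}}_{H^{-\gamma}} \norm{\mu}_{H^{-\gamma}}.
\end{equation*}
(Here $P_{p,q}(t)=t^2$ is the $p=1$, $q=0$ case; the general $P_{p,q}$ is handled identically with the same $p_0, q_0$.)

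Next I would bound each factor. By \eqref{lem_a_h}, $\norm{\mu_{A^{-1}}}_{H^{-\gamma}}^2 \lesssim I_{1-\gamma}(\mu) \leq \mathbf{C}$ and $\norm{\mu}_{H^{-\gamma}}^2 \lesssim I_{1-\gamma}(\mu) \leq \mathbf{C}$, so together these two contribute $\lesssim \mathbf{C}$. For the middle factor, since the choice $s = 1-\alpha_1\gamma$ with $\alpha_1 < 1$ ensures $s > 1-\gamma$, hence $1-s-\gamma < 0$, Lemma \ref{lem_eps} gives $\norm{\mu_{\eps}-\mu_{B^{-1}}}_{H^{-\gamma}}^2 \lesssim B^{\frac{1-s-\gamma}{5}} I_s(\mu) \leq \mathbf{C} B^{\frac{1-s-\gamma}{5}}$ for $B$ sufficiently large (and uniformly in $\eps$, which is what lets us later take $\eps \to 0$). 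Taking square roots, the middle factor is $\lesssim \mathbf{C}^{1/2} B^{\frac{1-s-\gamma}{10}}$. Multiplying the three bounds yields $\lesssim \mathbf{C}^{1/2}\cdot \mathbf{C}^{1/2} \cdot \mathbf{C}^{1/2} B^{\frac{1-s-\gamma}{10}} \cdot 2^{\kappa l} = \mathbf{C}^{3/2} 2^{\kappa l} B^{\frac{1-s-\gamma}{10}}$, which is exactly \eqref{eqae}. The implicit constant absorbs $C_{\gamma, p_0, q_0}$ and the implicit constants from \eqref{lem_a_h} and \eqref{e_eps_b}.

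One point requiring a little care: Proposition \ref{cor_sie} is stated for $f, g \in \mathcal{S}(\RR)$, so I should note that $\mu_{A^{-1}} = \mu * \phi_{A^{-1}}$ and $\mu_{\eps}-\mu_{B^{-1}} = \mu*(\phi_{\eps}-\phi_{B^{-1}})$ are genuinely Schwartz (finite measure convolved with a Schwartz function), and that $\mu$ itself is the third slot, which the proposition allows since it carries the $\norm{\mu}_{H^{-\gamma}}$ factor. A second point is that the bound must be uniform in $\eps$: this holds because the $\eps \to 0$ dependence only enters through $\norm{\mu_{\eps}-\mu_{B^{-1}}}_{H^{-\gamma}}$, and the estimate \eqref{e_eps_b} is uniform over all $\eps>0$ (the first piece of its proof is $\lesssim B^{-3}$ independent of $\eps$, the second is $\leq 4B^{\frac{1-s-\gamma}{5}}I_s(\mu)$ independent of $\eps$).

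I do not expect a genuine obstacle here — this is a routine bookkeeping argument once Proposition \ref{cor_sie}, \eqref{lem_a_h}, and Lemma \ref{lem_eps} are in hand. The only mild subtlety is tracking the exponents on $\mathbf{C}$ and on $B$ correctly through the square roots, and making sure the sign condition $1-s-\gamma<0$ (equivalently $s>1-\gamma$) is in force, which is guaranteed by the parameter choice $s=1-\alpha_1\gamma$, $\beta'=1-\alpha_2\gamma$ with $1>\alpha_1>\alpha_2>0$ made in the proof of Lemma \ref{p_ci_bound}. The analogous Type II terms in Table \ref{table_1} (those with a $\mu_{\text{mid}}$ or $\mu_{\text{high}}$ in the first slot, or $\mu_{\text{high}}$ in both) follow by the same scheme, replacing $\norm{\mu_{A^{-1}}}_{H^{-\gamma}}$ by $\norm{\mu_{B^{-1}}-\mu_{A^{-1}}}_{H^{-\gamma}} \lesssim A^{-3/2}$ (from Lemma \ref{lem_b}) or by a second copy of the $B^{\frac{1-s-\gamma}{10}}$ factor as appropriate, which accounts for the varying powers of $A$ and $B$ recorded in the table.
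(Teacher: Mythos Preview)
Your proposal is correct and follows essentially the same route as the paper: apply Proposition \ref{cor_sie} to obtain the trilinear Sobolev bound, then control $\norm{\mu_{A^{-1}}}_{H^{-\gamma}}$ and $\norm{\mu}_{H^{-\gamma}}$ via \eqref{lem_a_h} and $\norm{\mu_{\eps}-\mu_{B^{-1}}}_{H^{-\gamma}}$ via Lemma \ref{lem_eps}. Your additional remarks on the Schwartz regularity of $f,g$, uniformity in $\eps$, and the sign condition $1-s-\gamma<0$ are accurate and make explicit points the paper leaves implicit.
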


\begin{proof}
    Using Proposition \ref{cor_sie},
    \begin{equation*}
        \begin{split}
    & \left|\int \int \mu_{A^{-1}}(x+t) (\mu_{\eps}-\mu_{B^{-1}})(x+t^2) \tau_l(t) dt d\mu(x) \right| \\
    \leq &  C_{\gamma, p_0, q_0} 2^{\kappa l} \norm{\mu_{A^{-1}}}_{H^{-\gamma}}  \norm{\mu_{\eps}-\mu_{B^{-1}}}_{H^{-\gamma}} \norm{\mu}_{H^{-\gamma}} \\
    \lesssim & 2^{\kappa l} [I_{1-\gamma}(\mu)]^{\frac{1}{2}} [B^{\frac{1-s-\gamma}{5}} I_s(\mu)]^{\frac{1}{2}} [I_{1-\gamma}(\mu)]^{\frac{1}{2}} 
        \end{split}
    \end{equation*}
    by (\ref{lem_a_h}) and Lemma \ref{lem_eps}.
\end{proof}

\section{Appendix}

The following identity is used to study the integral in the frequency space.
\begin{equation}
\label{tof}
\begin{split}
   & \int \int f(x+t) g(x+t^2) h(x)\tau_l(t)dtdx \\
   = & \int \int \widehat{h}(\xi+\eta)\conj{\widehat{f}}(\xi)\conj{\widehat{g}}(\eta) \int e^{-2\pi i (t\xi+t^2\eta)} \tau_l(t)dtd\xi d\eta \\
    = & 2^{-l}\int \int \widehat{h}(\xi+\eta)\conj{\widehat{f}}(\xi)\conj{\widehat{g}}(\eta) \int e^{-2\pi i (2^{-l}t\xi+2^{-2l}t^2\eta)} \tau(t)dtd\xi d\eta.
\end{split}
\end{equation}

The $s$-energy integral of a measure is given by
\begin{equation}
\label{s_energy}
    I_s(\mu) = \int \int |x-y|^{-s} d\mu(x) d\mu(y) = \rho_s \int |\widehat{\mu}(\xi)|^2|\xi|^{-\frac{s}{2}} d\xi,
\end{equation}
where
\begin{equation*}
\label{rho_gamma}
    \rho_s = \pi^{s-1/2}\frac{\Gamma(\frac{1-s}{2})}{\Gamma(\frac{s}{2})},
\end{equation*}
and $\Gamma$ is the Gamma function. We refer readers to \cite{mattila_2015} for details on the $s$-energy integral and its relation to the Hausdorff dimension.

The Sobolev norm is given by
\begin{equation}
\label{sob_norm}
    \norm{f}_{H^{s}}^2 := \int |\widehat{f}(\xi)|^2 (1+|\xi|^2)^{\frac{s}{2}} d\xi
\end{equation}
for $f \in \mathcal{S}(\RR)$ and $s \in \RR$.

\subsection{Positive Lebesgue measure sets}

In the introduction, we state that every set with a positive Lebesgue measure contains the pattern $\{x, x+t, x+t^2\}$. We prove a more general statement below.

\begin{lemma}
\label{pos_leb_quad}
    Let $f_j: \RR^{d} \to \RR^d$ be continuous functions with $f_j(0)=0$ for $j \in \{1, \ldots, n\}$. If $E \subset \RR^{d}$ and the Lebesgue measure $|E| > 0$, then there exist $x, t \in \RR^d$, $t \neq 0$, such that $\{x+f_j(t)\}_{j=1}^{n} \subset E$.
\end{lemma}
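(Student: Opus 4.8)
The statement is a soft, measure-theoretic fact, so the plan is to extract positivity from an averaged configuration count rather than to construct the pattern by hand. First I would reduce to the case where $E$ is bounded with $|E|>0$ (intersect with a large ball), and replace $E$ by a density point: by the Lebesgue density theorem, almost every $x_0 \in E$ satisfies $|E \cap B(x_0,r)| / |B(x_0,r)| \to 1$ as $r \to 0$. Fix such an $x_0$ and work at a small scale $r$ on which the density is, say, at least $1 - \eta$ for a small $\eta$ to be chosen.

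\textbf{Averaging argument.} The idea is to count, for $t$ ranging over a small ball $B(0,\rho)$, how often \emph{all} of the translates $x_0 + f_j(t)$, $j=1,\dots,n$, land in $E$; if the average over $t$ of the indicator of this event is positive, some $t \neq 0$ works. Concretely, consider
\begin{equation*}
    \int_{B(0,\rho)} \prod_{j=1}^{n} \mathbbm{1}_{E}\bigl(x_0 + f_j(t)\bigr)\, dt.
\end{equation*}
By the union bound, the integrand is at least $1 - \sum_{j=1}^n \mathbbm{1}_{E^c}(x_0 + f_j(t))$, so it suffices to show
\begin{equation*}
    \sum_{j=1}^{n} \int_{B(0,\rho)} \mathbbm{1}_{E^c}\bigl(x_0 + f_j(t)\bigr)\, dt < |B(0,\rho)|.
\end{equation*}
For each $j$, since $f_j$ is continuous with $f_j(0)=0$, the map $t \mapsto x_0 + f_j(t)$ sends $B(0,\rho)$ into $B(x_0, \omega_j(\rho))$ where $\omega_j(\rho) \to 0$ as $\rho \to 0$ (modulus of continuity). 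Hence, by the change-of-variables bound for pushforwards of Lebesgue measure one cannot directly control the integral unless $f_j$ is nice, so instead I would bound $\int_{B(0,\rho)} \mathbbm{1}_{E^c}(x_0+f_j(t))\,dt$ by noting $\mathbbm{1}_{E^c}(x_0+f_j(t)) = 0$ unless $x_0 + f_j(t) \in E^c$, i.e. unless $t$ lies in the set $f_j^{-1}(E^c - x_0)$; the difficulty is that $f_j$ need not be measure-preserving or even Lipschitz, so the preimage of a small-measure set can be large.

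\textbf{Handling the obstacle.} This is the main obstacle, and it is the reason the clean "density point + union bound" has to be replaced by Fubini in the \emph{range}. The fix: integrate in a product space and swap the roles. Consider the substitution $x = x_0$ fixed and instead average over a genuinely $d$-dimensional family. Following Lemma~\ref{pos_leb_quad}'s likely intent (an iterated-density / Steinhaus-type argument), I would proceed inductively on $n$: the case $n=1$ is the statement $|E|>0 \implies \exists x, t\neq 0,\ \{x, x+f_1(t)\}\subset E$, which follows because $E \cap (E - f_1(t))$ must have positive measure for $t$ in a set of positive measure near $0$ — indeed $\int \mathbbm{1}_E(x)\mathbbm{1}_E(x+f_1(t))\,dx\,dt$ over a neighborhood, after pushing forward by $f_1$, is handled by a Fubini argument using that $\mathbbm{1}_E(x)\mathbbm{1}_E(x+y)$ integrates (in $x$) to a function of $y$ continuous at $0$ with value $|E|>0$, combined with the fact that the image $f_1(B(0,\rho))$ accumulates at $0$. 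Because $f_1$ is merely continuous, one uses that $f_1(B(0,\rho))$ contains points arbitrarily close to $0$ (but possibly not a neighborhood), so one picks a sequence $t_k \to 0$, $t_k \neq 0$, with $y_k := f_1(t_k) \to 0$; then $|E \cap (E - y_k)| \to |E| > 0$, so for large $k$ it is positive, giving the pattern. For the inductive step with $n$ functions, apply the $n=1$ conclusion to the set $E' := \bigcap_{j=2}^n (E - f_j(t) + f_1(t))$ — wait, this does not quite close; more carefully, I would fix $t$ and note that $\bigcap_{j=1}^n \bigl(E - f_j(t)\bigr)$ having positive measure for some small $t\neq 0$ is what we want, and $|\bigcap_{j=1}^n (E - f_j(t))| \geq |E| - \sum_{j=1}^n |E \bigtriangleup (E - f_j(t))|$, where each $|E \bigtriangleup (E - y)| \to 0$ as $y \to 0$ by continuity of translation in $L^1$ (this is the standard fact, \cite{eg}), and $f_j(t) \to 0$ as $t \to 0$. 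Choosing $t \neq 0$ small enough that each term is $< |E|/(2n)$ forces $|\bigcap_j (E - f_j(t))| > |E|/2 > 0$, and any $x$ in this intersection satisfies $\{x + f_j(t)\}_{j=1}^n \subset E$. So the genuine content is just continuity of translation in $L^1$ plus continuity of the $f_j$ at $0$; the apparent obstacle about non-nice $f_j$ dissolves because we never push measure \emph{forward} through $f_j$ — we only evaluate $f_j$ at a single well-chosen small $t$.
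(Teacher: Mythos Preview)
Your proposal is correct, and the argument you land on in the final paragraph is exactly the paper's proof: show that $\bigl|\bigcap_{j=1}^n (E - f_j(t))\bigr| \geq |E| - \sum_{j=1}^n |E \setminus (E - f_j(t))|$ and use continuity of translation in $L^1$ together with $f_j(t)\to 0$ to make each summand small for some $t\neq 0$. The density-point and averaging detours before that are unnecessary scaffolding you can drop; the core step is the $L^1$-modulus-of-continuity fact you eventually isolate.
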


\begin{proof}
    Let
    \begin{equation*}
        I(t) = \left|\bigcap_{j=1}^{n} (E-f_j(t)) \right|.
    \end{equation*}
    Then, for $t \in \RR^d$, $I(t) \leq I(0) = |E|>0$. In addition,
    \begin{equation*}
        I(0)-I(t) \leq |E\backslash \bigcap_{j=1}^{n} (E-f_j(t)) | \leq \sum_{j=1}^{n} |E \backslash (E-f_j(t))|.
    \end{equation*}
    We note that by the $L^1$-modulus of continuity, $$\int_{\RR^{d}} |f(x+h)-f(x)| dx \to 0 \text{ as } h \to 0$$
    for $f \in L^{1}(\RR^{d})$. Therefore, by applying the above with $f=\mathbf{1}_{E}$, we have $|E\backslash(E-f_j(t))| \to 0$ as $t \to 0$.
\end{proof}

\subsection{Existence of a measure with a spectral gap}
\label{sec_sg}

We present the one-dimensional version of section 3 in \cite{kos}, where dyadic rectangles of size $2^{-T}\times4^{-T}$ are examined for a $T \geq 1$. For $\RR$, we use dyadic intervals of length $2^{-T}$.

\begin{proof}[Proof of Lemma \ref{p_sg}]

Here, we use the dyadic intervals as stated in the proof of Theorem \ref{t_main} in section \ref{sec_2}. Let 
\begin{equation}
\label{varphi_choice}
    \varphi \in C^{\infty}(\RR), \text{spt }\varphi \subset (0, 1), \int \varphi(x) dx = 1, \text{ and } \norm{\varphi}_{L^{\infty}} = 2.
\end{equation}
For $N \in \NN$ large, there exists $C_N$, such that $|\xi|^N|\widehat{\varphi}(\xi)| \leq C_N$. Hence,
\begin{equation*}
    \begin{split}
        \int_{|\xi| \geq A^{\frac{1}{5}}} |\widehat{\varphi}(\xi)| d\xi \leq  C_N \int_{|\xi| \geq A^{\frac{1}{5}}} |\xi|^{-N} d\xi = \frac{2C_N}{N-1}A^{\frac{1-N}{5}}.
    \end{split}
\end{equation*}
If $N = 21$, $\frac{C_{21}}{5} \leq A$, then $\int_{|\xi| \geq A^{\frac{1}{5}}} |\widehat{\varphi}(\xi)| d\xi \leq 2^{-1}A^{-3}$. 

Let $T \in \NN$ such that $2^{-T+3}B^4 \leq 2^{-1}A^{-3}$. Suppose that $\mathcal{H}^{\beta}_{\infty}(E) \geq 1-\delta$, where $\delta = 2^{-3T-3}$. We denote $\text{ch}(Q) \subset \mathcal{D}$ to be the generation-$T$ children of $\mathbf{Q} = [0, 1]$.
We claim that if $1-\beta$ is small, 
$\mathcal{H}^{\beta}_{\infty}(E \cap Q) \geq 2^{-1} l(Q)^{\beta}$ for $Q \in \text{ch}(\mathbf{Q})$.

\begin{itemize}
    \item To see that the claim holds, let $$\mathcal{G} = \{ Q \in \text{ch}(\mathbf{Q}): \mathcal{H}^{\beta}_{\infty}(E \cap Q ) \geq 2^{-1} l(Q)^{\beta}  \}.$$ If $\mathcal{G} \subsetneq \text{ch}(\mathbf{Q})$, 

\begin{equation*}
    \begin{aligned}
        1-2^{-3T-3} = 1-\delta \leq & \mathcal{H}^{\beta}_{\infty}(E \cap \mathbf{Q}) \\
        \leq & \sum_{Q \in \mathcal{G}} \mathcal{H}^{\beta}_{\infty}(E \cap Q) + \sum_{Q \in \text{ch}(\mathbf{Q}) \backslash \mathcal{G}} \mathcal{H}^{\beta}_{\infty}(E \cap Q) \\
        \leq &  \sum_{Q \in \mathcal{G}} l(Q)^{\beta} + (1-2^{-1})\sum_{Q \in \text{ch}(\mathbf{Q}) \backslash \mathcal{G}} l(Q)^{\beta} \\
        \leq & \sum_{Q \in \text{ch}(\mathbf{Q})} l(Q)^{\beta} - 2^{-1} 2^{-T\beta} & \text{ since }|\text{ch}(\mathbf{Q}) \backslash \mathcal{G}| \geq 1\\
        =  & 2^{-T\beta}(2^T-2^{-1}).
    \end{aligned}
\end{equation*}

However, for $\beta$ sufficiently close to $1$, $1-2^{-3T-3} > 2^{-T\beta}(2^T-2^{-1})$, which contradicts the inequality above.
\end{itemize}

We apply Frostman's lemma \cite[Theorem 2.7]{mattila_2015} to $E \cap Q$ for each $Q \in \text{ch}(\mathbf{Q})$ to obtain a Borel measure $\mu_Q^{0}$, where $\text{spt }\mu_Q^{0} \subset E \cap \bar{Q}$, $\mu^{0}_{Q}(B(x, r)) \leq C r^{\beta}$ for all $x \in \RR$, $r > 0$, and $C$ does not depend on $Q$, and 
\begin{equation}
\label{mu_Q0Q}
    \mu_Q^{0}(Q) \geq \mathcal{H}^{d}_{\infty}(E\cap Q) \geq 2^{-1} l(Q)^{\beta}.
\end{equation}

To construct the measure $\mu$ supported on $E$, we first normalize each $\mu_{Q}^{0}$ according to $\varphi$. Let 
\begin{equation*}
    \mu_Q = \frac{w(Q)}{\mu_Q^{0}(Q)} \mu_Q^{0}, \text{ where }  w(Q) = \int_{Q} \varphi(x) dx.
\end{equation*}

Then by (\ref{varphi_choice}),
\begin{equation}
\label{small_q_total}
    \mu_Q(Q) = w(Q) \leq \norm{\varphi}_{L^{\infty}} |T_{\mathbf{Q}}(Q)| = 2^{1-T} = 2 l(Q),
\end{equation}

In addition, by (\ref{mu_Q0Q}),
\begin{equation}
\label{small_q_f}
    \mu_Q (B(x, r)) \leq \frac{2 l(Q)}{ 2^{-1} l(Q)^{\beta}}C r^{\beta} \leq 2^{2+{T(\beta-1)}} C r^{\beta} \leq 4C r^{\beta}.
\end{equation}

Let $\mu = \sum_{Q \in \text{ch}(\mathbf{Q})} \mu_Q$, then $\text{spt }\mu \subset E$. We claim that $\mu$ is the desired measure. First, the total mass is
\begin{equation}
\label{big_q_total}
    \mu(\mathbf{Q}) = \sum_{Q \in \text{ch}(\mathbf{Q})} w(Q) = \sum_{Q \in \text{ch}(\mathbf{Q})} \int_{Q} \varphi(x)dx = 1. 
\end{equation}

Second, $\mu$ is a $\beta$-Frostman's measure.
\begin{itemize}
    \item If $r \leq 2^{-T}$, $B(x, r)$ intersects at most four $Q \in \text{ch}(\mathbf{Q})$, so $\mu(B(x, r)) \leq 16 Cr^{\beta}$ from (\ref{small_q_f}).
    \item If $r \geq 1$, $\mu(B(x, r)) \leq 1 \leq r^{\beta}$ from (\ref{big_q_total}). 
    \item If $2^{-T} \leq r \leq 1$, by (\ref{small_q_total}),
    \begin{equation*}
    \begin{split}
        \mu(B(x, r)) \leq & \sum_{Q \in \text{ch}(\mathbf{Q}), Q \bigcap B(x, r) \not = \emptyset} \mu_Q(Q) \\
        \leq & 8\left(\frac{r}{2^{-T}} \right) 2^{1-T}  =   16r  \leq 16r^{\beta}.
    \end{split}
    \end{equation*}
\end{itemize}
    
Third, $\mu$ satisfies the spectral gap condition.
\begin{itemize}
    \item For each $Q \in \text{ch}(\mathbf{Q})$, 
    $$\mu(Q) = w(Q) = \int_{Q}\varphi(x) dx.$$
    Therefore, for $c_Q$, the center of the cube $Q$,
    $$\int_{Q} e^{-2\pi i c_Q \xi} d\mu(x) = \int_{Q} e^{-2\pi i c_Q \xi} \varphi(x) dx.$$

    For a $\xi \in \RR$, the map $x \to e^{-2\pi i x \xi}$ is $D |\xi|$-Lipschitz for a $D>0$. Since $|x-c_Q| \leq 2^{-T}$ if $x \in Q$,
    \begin{equation*}
        \begin{split}
            |\widehat{\mu}(\xi) - \widehat{\varphi}(\xi)| 
            = & \left|  \int e^{-2\pi i x \xi} d\mu(x) - \int e^{-2\pi i x \xi}  \varphi(x) dx\right| \\
            \leq & \sum_{Q \in \text{ch}(\mathbf{Q})}  \int_{Q} |e^{-2\pi i x \xi} - e^{-2\pi i c_Q \xi} | d\mu(x) +  \int_{Q} |e^{-2\pi i x \xi} - e^{-2\pi i c_Q \xi}|  \varphi(x) dx \\
            \leq & 2^{-T+1}D |\xi|.
        \end{split}
    \end{equation*}

    Therefore,
    \begin{equation*}
    \begin{split}
         \int_{A^{\frac{1}{5}} \leq |\xi| \leq B^2} |\widehat{\mu}(\xi)| d\xi \leq &   \int_{A^{\frac{1}{5}} \leq |\xi| \leq B^2} |\widehat{\mu}(\xi) - \widehat{\varphi}(\xi)| d\xi +  \int_{A^{\frac{1}{5}} \leq |\xi| \leq B^2} | \widehat{\varphi}(\xi)| d\xi \\
         \leq & D 2^{-T+1} \int_{ |\xi| \leq B^2}  |\xi| d\xi +  \int_{|\xi| \geq A^{\frac{1}{5}}} | \widehat{\varphi}(\xi)| d\xi \\
         \leq & D 2^{-T+1} B^4 +  2^{-1}A^{-3} \leq A^{-3}
    \end{split}
    \end{equation*}
    by our choice of $T$.
\end{itemize}

Finally, we apply the following lemma to obtain a bound on the $s$-energy integral of $\mu$.
\end{proof}
\begin{lemma}\cite[Section 2.5]{mattila_2015}
\label{lem_fn}
    Suppose that $\nu \in \mathcal{M}([0, 1])$, and there exists $C>0$ such that $\nu(B(x, r)) \leq C r^{\beta}$ for any $x \in [0, 1], r > 0$. Then, for $s< \beta$, $I_{s}(\nu) \leq 1+\frac{Cs}{\beta-s}$.
\end{lemma}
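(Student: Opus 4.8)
The statement to prove is Lemma~\ref{lem_fn}: a standard Frostman-measure energy bound. I plan to prove it by the classical dyadic-annuli decomposition of the energy integral, tracking constants carefully so that the final bound is exactly $1 + \frac{Cs}{\beta - s}$.

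\medskip

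\textbf{Approach.} First I would fix $x \in [0,1]$ and estimate the inner integral $\int |x-y|^{-s}\, d\nu(y)$ using the layer-cake (distribution function) formula:
\begin{equation*}
    \int |x-y|^{-s}\, d\nu(y) = \int_0^{\infty} \nu\big(\{ y : |x-y|^{-s} > \lambda \}\big)\, d\lambda = \int_0^{\infty} \nu\big(B(x, \lambda^{-1/s})\big)\, d\lambda.
\end{equation*}
Since $\nu$ is a probability measure, $\nu(B(x,r)) \le 1$ always, and by hypothesis $\nu(B(x,r)) \le C r^{\beta}$. Splitting the $\lambda$-integral at the threshold $\lambda = 1$ (equivalently, radius $r = 1$), on $\{\lambda \le 1\}$ I use the trivial bound $\nu(B(x,\lambda^{-1/s})) \le 1$, contributing $1$; on $\{\lambda \ge 1\}$ I use $\nu(B(x,\lambda^{-1/s})) \le C \lambda^{-\beta/s}$, contributing $C \int_1^{\infty} \lambda^{-\beta/s}\, d\lambda = C \cdot \frac{1}{\beta/s - 1} = \frac{Cs}{\beta - s}$, where the integral converges precisely because $s < \beta$ forces $\beta/s > 1$. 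This yields $\int |x-y|^{-s}\, d\nu(y) \le 1 + \frac{Cs}{\beta - s}$ uniformly in $x$.

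\medskip

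\textbf{Conclusion.} Integrating this uniform bound against $d\nu(x)$ and using $\nu([0,1]) = 1$ gives $I_s(\nu) = \int\!\int |x-y|^{-s}\, d\nu(y)\, d\nu(x) \le 1 + \frac{Cs}{\beta - s}$, as claimed. An equivalent route, if one prefers not to invoke the layer-cake formula, is to write $[0,1] \setminus \{x\}$ as the union of dyadic annuli $A_k(x) = \{ y : 2^{-k-1} < |x-y| \le 2^{-k} \}$ for $k \ge 0$ together with $\{ y : |x - y| > 1 \}$; on $A_k(x)$ one has $|x-y|^{-s} \le 2^{(k+1)s}$ and $\nu(A_k(x)) \le \nu(B(x, 2^{-k})) \le \min\{1, C 2^{-k\beta}\}$, and summing the geometric-type series $\sum_k 2^{(k+1)s} \min\{1, C 2^{-k\beta}\}$ again produces a bound of the stated form (possibly after adjusting $C$, but the cleaner layer-cake computation gives the sharp constant $1 + \frac{Cs}{\beta-s}$).

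\medskip

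\textbf{Main obstacle.} There is essentially no serious obstacle here — this is a textbook lemma (it is cited from \cite[Section 2.5]{mattila_2015}). The only point requiring a little care is bookkeeping the constants so that the bound comes out as exactly $1 + \frac{Cs}{\beta - s}$ rather than a weaker $C'\big(1 + \frac{s}{\beta-s}\big)$; this is why the layer-cake split at radius $1$ (using that $\nu$ is a probability measure on the near scales and Frostman on the far scales) is the right way to organize the estimate, and one should make sure the split point is chosen so that both pieces are controlled cleanly.
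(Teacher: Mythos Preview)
Your proof is correct and essentially identical to the paper's own argument (the source contains a proof, commented out, immediately following the lemma): both use the layer-cake identity $\int |x-y|^{-s}\,d\nu(y) = s\int_0^\infty \nu(B(x,r))\,r^{-s-1}\,dr$ (your $\lambda$-version is the same formula after the substitution $\lambda=r^{-s}$), split at radius $r=1$, and use $\nu(B(x,r))\le 1$ on $r\ge 1$ and $\nu(B(x,r))\le Cr^\beta$ on $r\le 1$ to get exactly $1+\frac{Cs}{\beta-s}$.
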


\subsection{Sobolev improving estimate}
\label{sec_sie}

Proposition \ref{cor_sie} is deduced from the following proposition.

\begin{ppt}\cite{fgp}
\label{th_sie}
There exist $\gamma_0>0$ and $\kappa >0$ such that the following statement holds.
Let $\gamma \in (0, \gamma_0)$, $p_0 > 0$, and $q_0 \geq 0$. Then, there exists $C_{\gamma, p_0, q_0} > 0$ such that
\begin{equation*}
\left|\int \int f(x+t) g(x+P_{p, q}(t)) h(x) \tau_l(t)dt  dx \right| \leq C_{\gamma, p_0, q_0} 2^{\kappa l} \norm{f}_{H^{-\gamma}}  \norm{g}_{H^{-\gamma}} \norm{h}_{H^{-\gamma}}
\end{equation*}
for all $f, g, h \in \mathcal{S}(\RR)$ if $P_{p, q}(t) = pt^2+qt$ for $|p| \geq p_0$ and $|q| \leq q_0$.
\end{ppt}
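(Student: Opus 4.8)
The plan is to establish the trilinear estimate by means of a Fourier-analytic reduction to an oscillatory integral bound that exploits the curvature of the curve $\gamma(t) = (t, P_{p,q}(t))$. First I would open up the expression using Plancherel as in identity (\ref{tof}): writing $f, g, h$ in terms of their Fourier transforms and rescaling $t \mapsto 2^{-l}t$, the left-hand side becomes
\begin{equation*}
2^{-l}\left| \int \int \widehat{h}(\xi+\eta)\,\conj{\widehat{f}}(\xi)\,\conj{\widehat{g}}(\eta)\, m_l(\xi, \eta)\, d\xi\, d\eta \right|,
\end{equation*}
where $m_l(\xi,\eta) = \int e^{-2\pi i(2^{-l}t\xi + P_{p,q}(2^{-l}t)\eta)}\tau(t)\,dt$ is the oscillatory multiplier. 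The key analytic input is stationary phase / van der Corput applied in the $t$ variable: since $\tau$ is supported on $[2^{-1}, 2^2]$ and $\partial_t^2 P_{p,q}(2^{-l}t) = 2p\cdot 2^{-2l}$, the phase has second derivative comparable to $2^{-2l}|p||\eta|$, while its first derivative is of size $\sim 2^{-l}|\xi| + 2^{-l}|\eta|$ (the linear $qt$ term is harmless since $|q|\le q_0$). This yields a decay estimate of the form $|m_l(\xi,\eta)| \lesssim \min\{1,\, (2^{-2l}|p||\eta|)^{-1/2}\} \lesssim 2^{l} |p|^{-1/2} |\eta|^{-1/2}$ in the region where the phase has no stationary point, and an analogous nonstationary-phase bound $|m_l(\xi,\eta)| \lesssim_N (1 + 2^{-l}|\xi| + 2^{-l}|\eta|)^{-N}$ away from the set where $\xi$ and $\eta$ are comparably sized; one interpolates/combines these into a single bound that has a small negative power of $|\eta|$ (and, after using the $\xi+\eta$ coupling, of $|\xi|$ and $|\xi+\eta|$ too), at the cost of a fixed positive power $2^{\kappa l}$.

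The second step is to feed this multiplier bound into the trilinear integral and run a Cauchy–Schwarz / Schur-test argument to land on three factors of $H^{-\gamma}$ norms. Concretely, once $|m_l(\xi,\eta)|$ is controlled by $2^{\kappa l}$ times a product of negative powers $(1+|\xi|)^{-a}(1+|\eta|)^{-a}(1+|\xi+\eta|)^{-a}$ for some small $a>0$ depending on $\gamma_0$, one distributes these weights among the three transforms, writes $\conj{\widehat f}(\xi) = [\widehat f(\xi)(1+|\xi|^2)^{-\gamma/2}]\cdot(1+|\xi|^2)^{\gamma/2}$ and similarly for $g$ and $h$, and applies Cauchy–Schwarz in $(\xi,\eta)$. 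The $H^{-\gamma}$ pieces come out as $\norm{f}_{H^{-\gamma}}\norm{g}_{H^{-\gamma}}\norm{h}_{H^{-\gamma}}$, and the remaining purely-weight integral $\int\int (1+|\xi|)^{-2a+\gamma}(1+|\eta|)^{-2a+\gamma}(1+|\xi+\eta|)^{-2a+\gamma}\,d\xi\,d\eta$ converges provided $\gamma$ is small enough relative to $a$ (this is where $\gamma_0$ and the exponent $\kappa$ get pinned down). This step is essentially bookkeeping once the multiplier estimate is in hand.

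The main obstacle is the oscillatory integral estimate itself, and specifically getting a bound on $m_l(\xi,\eta)$ that decays in \emph{all three} of the frequency variables $\xi$, $\eta$, $\xi+\eta$ simultaneously — curvature in $t$ only directly gives decay in $\eta$ (the coefficient of the curved component $t^2$), so one must combine the second-derivative (van der Corput) bound with integration-by-parts in the non-stationary regime and the algebraic relation among the frequencies forced by the $\widehat h(\xi+\eta)$ factor to manufacture the missing decay; care is also needed to track the dependence of constants on $p_0, q_0$ (the curvature is $\sim p_0$, so $p_0^{-1/2}$-type losses appear, consistent with the stated monotonicity of $C_{\gamma,p_0,q_0}$) and to keep the $l$-dependence polynomial, i.e. of the form $2^{\kappa l}$ with $\kappa$ absolute. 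Since this is exactly the content of the cited reference \cite{fgp}, I would ultimately quote their computation for the sharp form, but the sketch above is the route I would reconstruct.
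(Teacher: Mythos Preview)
Your opening move---rewriting the trilinear form via identity (\ref{tof}) and attacking the oscillatory multiplier $m_l(\xi,\eta)$ with stationary phase---matches the paper's own outline exactly, and the paper likewise defers to \cite{fgp} for the details. The issue is in your second step.

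The Cauchy--Schwarz reduction you describe does not close. Van der Corput applied to the phase $t\mapsto 2^{-l}t\xi+P_{p,q}(2^{-l}t)\eta$ only yields $|m_l(\xi,\eta)|\lesssim 2^{l}|p|^{-1/2}|\eta|^{-1/2}$, i.e.\ a total of one half power of decay; distributing this among three weights $(1+|\xi|)^{-a}(1+|\eta|)^{-a}(1+|\xi+\eta|)^{-a}$ forces $a\le 1/6$. But the ``remaining purely-weight integral'' you write, $\iint (1+|\xi|)^{-2a+\gamma}(1+|\eta|)^{-2a+\gamma}(1+|\xi+\eta|)^{-2a+\gamma}\,d\xi\,d\eta$, is an integral over $\RR^2$ whose integrand decays like $(|\xi|+|\eta|)^{3(\gamma-2a)}$ in generic directions, so convergence needs $6a-3\gamma>2$, i.e.\ $a>1/3$. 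These two constraints are incompatible. More structurally, a single Cauchy--Schwarz on a double integral cannot produce three independent $L^2$ norms; the convolution structure in $\widehat{h}(\xi+\eta)$ means one is really trying to bound $\int H\cdot(F*G)$, and Young's inequality with all three inputs in $L^2$ fails by exactly this margin.

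What the paper's sketch includes and yours omits is the Littlewood--Paley decomposition: one splits $f,g,h$ into dyadic frequency pieces $f_j,g_k,h_m$, and for each triple the oscillatory bound (this is where \cite{li13} and \cite{hor} enter) gives a gain that is geometric in the scale disparity. Almost-orthogonality of the Littlewood--Paley pieces then allows summation over $(j,k,m)$ into the three $H^{-\gamma}$ norms, which is precisely the mechanism that substitutes for the unavailable pointwise weight convergence. Your identification of the obstacle---that curvature only directly gives decay in the $\eta$ variable---is correct, but the resolution is the dyadic decomposition rather than a direct Schur test.
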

 We refer readers to the proof in section 2 of \cite{fgp} that yields Proposition \ref{th_sie}, which is stronger than the version in that paper for quadratics of the form $P_{p, q}(t)=pt^2+qt$. The proof starts by applying (\ref{tof}) and Littlewood-Paley decomposition. Then, the methods of stationary phase and estimates in \cite{li13} and \cite{hor} are used.

\begin{proof}[Proof of Proposition \ref{cor_sie} given Proposition \ref{th_sie}]
    The function $T: \RR \to \CC$ given by $$T(x) = \int f(x+t) g(x+P_{p, q}(t))\tau_l(t)dt$$ is continuous. Since $\mu_{\eps} \to \mu$ weakly as $\eps \to 0$,
    \begin{align*}
           & \left|\int \int f(x+t) g(x+P_{p, q}(t))\tau_l(t)dt d\mu(x) \right| \\
           = & \lim_{\eps \to 0 } \left|\int T(x) \mu_{\eps}(x) dx \right| \\
           \leq & \lim_{\eps \to 0 }  C_{\gamma, p_0, q_0}  2^{\kappa l}\norm{f}_{H^{-\gamma}} \norm{g}_{H^{-\gamma}}  \norm{\mu_{\eps}}_{H^{-\gamma}}. \qedhere
    \end{align*}
\end{proof}

\bibliographystyle{plain} 
\bibliography{refs}

\end{document}